\newtheorem{theorem}{Theorem}[section]
\newtheorem{lemma}[theorem]{Lemma}
\newtheorem{prop}[theorem]{Proposition}
\newtheorem{question}[theorem]{Question}
\newtheorem{conj}[theorem]{Conjecture}
\newcommand{\GM}{G[\mathcal{M}]}
\DeclareMathOperator{\sign}{sgn}
\newcommand{\f}{\mathcal{F}}
\title{A collection of open problems in celebration of Imre Leader's 60th birthday}
\date{Monday 30th October, 2023}
\begin{document}

\maketitle


\section*{Dedication}

One of the great pleasures of working with Imre Leader is to experience his infectious delight on encountering a compelling combinatorial problem.

This collection of open problems in combinatorics has been put together by a subset of his former PhD students and students-of-students for the occasion of his 60th birthday. 

All of the contributors have been influenced (directly or indirectly) by Imre: his personality, enthusiasm and his approach to mathematics. The problems included cover many of the areas of combinatorial mathematics that Imre is most associated with: including extremal problems on graphs, set systems and permutations, and Ramsey theory.

This is a personal selection of problems which we find intriguing and deserving of being better known. It is not intended to be systematic, or to consist of the most significant or difficult questions in any area. Rather, our main aim is to celebrate Imre and his mathematics and to hope that these problems will make him smile. We also hope this collection will be a useful resource for researchers in combinatorics and will stimulate some enjoyable collaborations and beautiful mathematics.

\hfill David Ellis

\hfill Robert Johnson

\hfill (Editors and contributors);
\vspace{0.25cm}

\hfill Rahil Baber

\hfill Natalie Behague

\hfill Asier Calbet

\hfill Joshua Erde

\hfill Ron Gray

\hfill Maria-Romina Ivan

\hfill Barnab\'as Janzer

\hfill Luka Mili\'cevi\'c

\hfill John Talbot

\hfill Ta Sheng Tan 

\hfill Belinda Wickes

\hfill (Contributors).

\pagebreak

  \tableofcontents

\pagebreak

\section{Tur\'an problems in the hypercube (Rahil Baber and John Talbot).}

Let $\mathcal{Q}^n$ be the $n$-dimensional hypercube: the graph with vertex set $\{0,1\}^n$ and edges between any two vertices that differ in exactly one bit.

A \emph{$k$-subcube} is a sequence $s\in  \{0,1,*\}^n$ with exactly $k$ coordinates equal to $*$. So a $0$-subcube is a vertex of $\mathcal{Q}^n$ and a $1$-subcube is an edge. We denote the set of all $k$-subcubes of $\mathcal{Q}^n$ by $\mathcal{Q}^n_k$.
Given $s\in \mathcal{Q}^n_k$, we say that $i \in [n]$ is a \emph{flip-bit} of $s$ if $s(i)=*$ (where $s(i)$ denotes the $i^\textrm{th}$ coordinate of $s$).   A $d$-subcube $s_d\in \mathcal{Q}^n_d$  \emph{contains} the $k$-subcube $s_k\in \mathcal{Q}^n_k$ if  $s_k(i)=s_d(i)$ for every non-flip-bit $i$ of $s_d$ (in particular every flip-bit of $s_k$ is also a flip-bit of $s_d$ and $k\le d$). For example, $(0,*,*,*)$ is a 3-subcube of $\mathcal{Q}^4$ containing the following six $2$-subcubes: $(0,0,*,*),(0,1,*,*),(0,*,0,*),(0,*,1,*),(0,*,*,0)$ and $(0,*,*,1)$. 

We say that a set $S \subseteq \mathcal{Q}^n_k$ of $k$-subcubes is $\mathcal{Q}^n_d$-\emph{free} if for every  $d$-subcube $s_d\in \mathcal{Q}^n_d$ there is a $k$-subcube contained in $s_d$ that is not in $S$. The natural Tur\'an problem in the hypercube is: given $0 \le k \le d \le n$ what is the maximum size of a set of $\mathcal{Q}^n_d$-\emph{free} $k$-subcubes? We will denote this quantity by $\textrm{ex}_k(n,d)$.

Standard averaging arguments imply that for $0 \le k \le d$ the Tur\'an density
\[
\pi_k(d) =\lim_{n \to \infty} \frac{\textrm{ex}_k(n,d)}{|\mathcal{Q}_k^n|},\] exists.
Trivially $\pi_d(d) = 0$ and $\pi_0(1)=1/2$. The first non-trivial result in this area is due to Kostochka \cite{K} who gave an exact construction proving that $\pi_0(2)=2/3$. An old conjecture of Erd\H os  \cite{EC} implies that $\pi_1(2)=1/2$. The lower bound is easy while the current best upper bound is 0.60318 due to Baber \cite{B}.

We conjecture that for $d=3$ the non-trivial Tur\'an densities all agree.
\begin{conj}
  For $0 \le k \le 2$, $\pi_k(3)=3/4$.
\end{conj}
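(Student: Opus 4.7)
For the lower bound with $k=0$, take $\f=\{v\in\{0,1\}^n:|v|\not\equiv 0\pmod 4\}$: every $3$-subcube contains vertices of four consecutive weights, of which exactly one is $\equiv 0\pmod 4$, so $\f$ is $\mathcal{Q}^n_3$-free of density tending to $3/4$. For $k\in\{1,2\}$ the same trick does not transfer directly, because the $k$-subcubes inside a $3$-subcube take fixed-part weights $|v|$ from only $4-k$ consecutive values (three for edges, two for $2$-faces), with multiplicities $(3,6,3)$ and $(3,3)$ respectively. Direction-aware constructions of the form ``keep $(F,v)$ iff $|v|+h(F)\not\equiv 0\pmod 4$'' for some $h:\binom{[n]}{k}\to\mathbb{Z}/4$ turn out to be $\mathcal{Q}^n_3$-free only under strong restrictions on $h$ (for $k=1$, no three directions may share an $h$-value; for $k=2$, every triangle in the induced edge-colouring of $K_n$ must be rainbow), which fail for $n\ge 9$ and $n\ge 6$ respectively by pigeonhole and a simple vertex-neighbourhood argument. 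More promising candidates are a blow-up or tensor product of the $k=0$ construction with a small optimal example in a bounded-dimensional cube, or starting with the $k$-subcubes whose entire vertex set avoids weights $\equiv 0\pmod 4$ (density $\tfrac12$ for $k=1$) and then enlarging carefully while preserving $\mathcal{Q}^n_3$-freeness.

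The upper bound is the main obstacle. The natural tool is the flag-algebra SDP over $3$- and $4$-subcube types, following Baber's approach that gave $\pi_1(2)\le 0.60318$; one hopes for an optimum of exactly $3/4$ with a rounded dual certificate. If the SDP falls short---as seems likely, given that no hypercube Tur\'an density other than Kostochka's $\pi_0(2)=2/3$ is known exactly---a combinatorial supplement will be needed. One attractive route is a reduction between the three cases: show that a $\mathcal{Q}^n_3$-free family of $k$-subcubes of density $d$ yields, via a shadow or projection argument, a $\mathcal{Q}^n_3$-free family of vertices of density at least $d$, collapsing $k=1,2$ to $\pi_0(3)\le 3/4$. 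A Shearer-type entropy inequality exploiting the four-level weight structure of a $3$-subcube is another natural avenue. The hardest step throughout will be closing the gap between any LP or flag-algebra relaxation and the exact value $3/4$, and the success of the whole plan likely hinges on first identifying a genuinely extremal lower-bound construction for $k=1$ and $k=2$ whose structure can then guide a matching stability-based upper bound.
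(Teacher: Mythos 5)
The statement you are addressing is a conjecture that the paper leaves open: no proof of $\pi_k(3)=3/4$ exists there. What the paper does supply is the lower bound $\pi_k(3)\ge 3/4$ for each $k\in\{0,1,2\}$, via three explicit constructions, and your plan only recovers one of them. Your $k=0$ construction (vertices of weight $\not\equiv 0 \pmod 4$) is exactly the paper's. But your claim that ``the same trick does not transfer directly'' to $k=1,2$, followed by ruling out constructions of the form ``keep $(F,v)$ iff $|v|+h(F)\not\equiv 0\pmod 4$'', is a dead end that the paper sidesteps: the correct refinement is not to shift the total fixed-part weight by a function of the direction set, but to \emph{split} the weight according to the positions of the flip-bits. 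For $k=1$ the paper takes $C_1^3$ to be all $1$-subcubes $s$ for which the left weight $w_0(s)$ and right weight $w_1(s)$ (sums of bits before and after the flip-bit) are not both even; a $3$-subcube $t$ with flip-bits $i_1<i_2<i_3$ always contains a $1$-subcube in direction $i_2$ with both partial weights even (choose the $0/1$ values at $i_1$ and $i_3$ appropriately), and the density is $3/4$ since ``both even'' has probability $1/4$. For $k=2$ the paper fixes a balanced bipartition of $[n]$ and takes $D_2^3$ to be all $2$-subcubes except those with both flip-bits in the same part and even non-flip-bit sum; any $3$-subcube has two flip-bits in a common part, and the third flip-bit can be set to make the excluded configuration appear. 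Neither construction lies in the family you analysed, so your pigeonhole obstructions do not apply to them.

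On the upper bound your proposal is candid that everything is speculative, which is consistent with the state of the art: the paper records only Baber's bound $\pi_0(3)\le 0.76900$ for $k=0$ and nothing nontrivial for $k=1,2$. Your suggested reduction (projecting a $\mathcal{Q}^n_3$-free family of $k$-subcubes to a $\mathcal{Q}^n_3$-free vertex family of no smaller density) is not known and would need an actual argument; the shadow of a dense $k$-subcube family need not be $\mathcal{Q}^n_3$-free. So the concrete gap is twofold: your plan as written does not even establish the lower bound for $k=1,2$, which is the part of the conjecture that is actually known, and it offers no route to the upper bound beyond hopes for flag algebras. If you revise, start by adopting the $C_1^3$ and $D_2^3$ constructions (and note that the paper generalises both to larger $d$, with the analogue of $D$ overtaking the analogue of $C$ for $d=3m+2$, $m\ge 7$).
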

To motivate this conjecture we describe three constructions which imply that $\pi_k(3)\ge 3/4$ for $k=0,1,2$.

For $k=0$ take the set of all vertices of weight (i.e.~sum of bits) not divisible by $4$, this is clearly $\mathcal{Q}_3^n$-free and implies that $\pi_0(3)\ge 3/4$.  Baber \cite{B} proved the upper bound $0.76900$.

For any $1$-subcube $s\in \mathcal{Q}_1^n$ let $w_0(s)$ and $w_1(s)$ denote the sum of bits to the left and right respectively of the single flip-bit. We claim that \[
C_1^3 = \{ s \in \mathcal{Q}_1^n: w_0(s), w_1(s) \textrm{ not both even}\},\] is $\mathcal{Q}_3^n$-free, and hence $\pi_1(3)\ge 3/4$. To see this let $t$ be any $3$-subcube in $ \mathcal{Q}_3^n$  and consider the $1$-subcubes contained in $t$ whose flip-bit is the second flip-bit of $t$. 
Choosing appropriate $0/1$ values for the two other flip-bits of $t$, we can find a $1$-subcube $s$ in $t$ with $w_0(s)$ and $w_1(s)$ both even, hence $s \not\in C_1^3$.

For $k=2$, take a balanced bipartition of $[n]$ and let $D_2^3$ contain all $2$-subcubes except those with both flip-bits in the same part whose sum of non-flip-bits is even. We claim that $D_2^3$ is $\mathcal{Q}_3^n$-free and hence $\pi_2(3)\ge 3/4$. To see this consider any $3$-subcube  $t \in \mathcal{Q}_3^n$. Since $t$ has three flip-bits, at least two belong to the same part of the balanced bipartition. Choosing the 0/1 value of the third flip-bit appropriately we can find a  $2$-subcube contained in $t$ that is not in $D_2^3$. 
\medskip

Both constructions can be generalised to any choice of $k$ and $d$, we will focus on the case of $k=2$. In much the same way as Turan's theorem extends Mantel's theorem, we can generalise $D_2^3$ to show that $\pi_2(d+1)\ge 1- 1/d^2$. Take a balanced partition of $[n]$ into $d$ parts and let $D_{2}^{d+1}$ be the set of all $2$-subcubes except those with both flip-bits in the same part whose sum of non-flip-bits is divisible by $d$.

The generalisation of $C_1^3$ is a little more complicated: we restrict our description to the case $3m+2$. For any $2$-subcube $s$, the two flip-bits of $s$ partition the non-flip-bits into three parts. We denote the sums of the bits in these parts by $w_0(s),w_1(s)$ and $w_2(s)$. Let $C_{2}^{3m+2}$ consist of all $2$-subcubes $s$ for which not all of $w_0(s),w_1(s),w_2(s)$ are divisible by $m+1$. To see that $C_{2}^{3m+2}$ is $\mathcal{Q}_{3m+2}^n$-free let $t$ be any $(3m+2)$-subcube and consider the $2$-subcubes in $t$ which use the $(m+1)^\textrm{th}$ and $(2m+2)^\textrm{th}$ flip-bits of $t$. Choosing appropriate 0/1 values for the other $3m$ flip-bits of $t$ we can find a $2$-subcube missing from $C_2^{3m+2}$, hence $\pi_2(3m+2) \ge 1- 1/(m+1)^3$.

The second construction is larger for $m\ge 7$ but it would be interesting to know if there are other constructions that beat both of these. 

\begin{question} Is $\pi_2(3m+2) = 1-1/(m+1)^3$ for $m \ge 7$?
\end{question}

\pagestyle{empty}

\pagebreak

\section{Connecting factors of the hypercube (Natalie Behague).}

Let $Q_d$ be the $d$-dimensional hypercube.

\begin{question}[Behague, \cite{Behague}] For each $d \in \mathbb{N}$ with $d \geq 3$, let $r=r(d)$ be the minimal positive integer such that there exists a 1-factorization $\mathcal{M}$ of $Q_d$ where the union of any $r$ distinct 1-factors is connected. What is the value of $r(d)$?
\end{question}

Note that to have $r(d)=2$ would mean there is a 1-factorization of $Q_d$ in which the union of any two of the 1-factors forms a Hamilton cycle (compare Kotzig's perfect 1-factorization conjecture for complete graphs \cite{Kotzig}). However, a cute argument shows that $r(d)$ is greater than two, for all $d \geq 3$. According to Bryant, Maenhaut and Wanless \cite{Bryant} this has been reproved many times, including by Laufer in 1980 \cite{Laufer}.
\begin{lemma}[Laufer, \cite{Laufer}] \label{thm:Bipartite}
	Let $H$ be a bipartite graph on two vertex classes each of size $n$, where $n$ is even. 
	Let $\mathcal{M}$ be a partition of $H$ into perfect matchings. Let $\GM$ be the graph on vertex set $\mathcal{M}$ where $M_i \sim M_j \in E(\GM)$ if $M_i \cup M_j$ is connected. Then $\GM$ must be bipartite.
\end{lemma}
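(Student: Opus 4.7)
The plan is to encode each perfect matching in $\mathcal{M}$ as a permutation of $[n]$ and then read off the cycle structure of the union of two matchings from the associated permutations. Label the two vertex classes of $H$ as $A=\{a_1,\dots,a_n\}$ and $B=\{b_1,\dots,b_n\}$. Each perfect matching $M\in\mathcal{M}$ determines a permutation $\sigma_M\in S_n$ via the rule $a_k\sim b_{\sigma_M(k)}$. Given two distinct matchings $M_i,M_j\in\mathcal{M}$, the union $M_i\cup M_j$ is a disjoint edge union (the matchings partition $H$) and 2-regular, hence a disjoint union of even cycles. Tracing an edge of $M_i$ from $a_k$ to $b_{\sigma_i(k)}$ and then an edge of $M_j$ back to $a_{\sigma_j^{-1}\sigma_i(k)}$ shows that the cycles of $M_i\cup M_j$ correspond bijectively to the cycles of $\sigma_j^{-1}\sigma_i\in S_n$, with a $2\ell$-cycle in $M_i\cup M_j$ for every $\ell$-cycle of $\sigma_j^{-1}\sigma_i$.

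Consequently, $M_i\cup M_j$ is connected if and only if it is a single Hamilton cycle of length $2n$, which happens if and only if $\sigma_j^{-1}\sigma_i$ is a single $n$-cycle in $S_n$. Here the hypothesis that $n$ is even enters decisively: an $n$-cycle can be written as a product of $n-1$ transpositions, so it has sign $(-1)^{n-1}=-1$, i.e.\ it is an odd permutation.

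The idea now is to 2-colour $\mathcal{M}$ by $c(M):=\sign(\sigma_M)\in\{+1,-1\}$. If $M_i\sim M_j$ in $\GM$, then $\sigma_j^{-1}\sigma_i$ is odd, so $\sign(\sigma_i)\ne\sign(\sigma_j)$, i.e.\ $c(M_i)\ne c(M_j)$. Thus $c$ is a proper 2-colouring of $\GM$, which is therefore bipartite.

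There is no real obstacle beyond spotting the right parity invariant: the proof is essentially a one-line parity check once the correspondence between cycle types of $\sigma_j^{-1}\sigma_i$ and of $M_i\cup M_j$ is in hand. The only subtlety worth flagging is that $\sign(\sigma_M)$ depends on the arbitrary labellings of $A$ and $B$, but relabelling multiplies every $\sign(\sigma_M)$ by the same fixed sign, so the partition of $\mathcal{M}$ into colour classes (and hence the validity of the 2-colouring) is invariant.
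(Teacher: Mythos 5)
Your proof is correct and uses the same key idea as the paper: encode matchings as permutations, observe that an edge of $\GM$ forces the quotient permutation to be an $n$-cycle, hence odd since $n$ is even. The only cosmetic difference is that you exhibit the bipartition directly via the colouring $M \mapsto \sign(\sigma_M)$, whereas the paper runs the equivalent argument by contradiction, multiplying the signs around a hypothetical odd cycle in $\GM$ to get $1 = (-1)^k$.
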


\begin{proof}
	Let $X$ and $Y$ be the vertex classes of $H$.
	A perfect matching $M$ naturally induces a function ${M: X \rightarrow Y}$, where $(x,M(x))$ is an edge of $M$.
	
	For two perfect matchings $M_i$ and $M_j$, let $\pi_{j,i}$ be the permutation $M_j^{-1}M_i$ on $X$.
	Note that $\pi_{i,i} = id$,  $\pi_{i,j} = \pi_{j,i}^{-1}$ and 
	$\pi_{k,j}\pi_{j,i} = \pi_{k,i}$.
	Note further that if $M_iM_j$ is an edge of $\GM$ then $M_i \cup M_j$ is a Hamilton cycle and so $\pi_{j,i}$ is a cycle of length $n$ on $X$. 
	
	Suppose for a contradiction that $\GM$ contains a odd cycle and let $M_{i_1}$, $M_{i_2}, \ldots$, $M_{i_k}$, $M_{i_1}$ be such a cycle. 
	The permutations $\pi_{i_2,i_1}$, $\pi_{i_3,i_2}, \ldots$, $\pi_{i_k,i_{k-1}}$, $\pi_{i_1,i_k}$ are all cycles of length $n$. Since $n$ is even, all of these are odd permutations. Now,
	\begin{align*} 
		1 = \sign(\pi_{i_1,i_1}) &= \sign(\pi_{i_1,i_k}\pi_{i_k,i_{k-1}} \pi_{i_{k-1},i_{k-2}} \ldots \pi_{i_3,i_2} \pi_{i_2,i_1}) \\
		&= \sign(\pi_{i_1,i_k})\sign(\pi_{i_k,i_{k-1}})\ldots \sign(\pi_{i_3,i_2})\sign(\pi_{i_2,i_1})\\
		&= (-1)^k = -1
	\end{align*}
	We have a contradiction, hence $\GM$ contains no odd cycles.
\end{proof}

A 1-factorization given in \cite{Behague} for $d \ne 6$ has the property that the union of any $\left( \left\lceil \frac{d}{2} \right\rceil + 1 \right)$ 1-factors is connected, hence $r(d) \le \left\lceil \frac{d}{2} \right\rceil + 1$ for $d \ne 6$. It seems entirely possible that $r$ is constant, and it could be even as small as 3.
\pagestyle{empty}


\pagebreak

\section{A conjectured new version of the Two Families Theorem (Asier Calbet).}

The `Two Families Theorem' of Bollob\'as is a celebrated result in extremal set theory. There are several different versions of this theorem, the simplest one being as follows. 

\begin{theorem}[Bollob\'as' `Two Families Theorem', 1965]\label{thm:original}

Let $a,b \geq 0$ be integers and $(A_i,B_i)_{i \in I}$ be a collection of pairs of finite sets, indexed by a finite set $I$, with the following properties.

\begin{enumerate}
    \item $|A_i| = a$, $|B_i| = b$ and $A_i \cap B_i = \emptyset $ for all $i \in I$.
    \item $A_i \cap B_j \neq \emptyset $ for all $i \neq j \in I$.
\end{enumerate}

Then $|I| \leq \binom{a+b}{a}$. 

\end{theorem}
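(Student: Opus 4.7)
The plan is to use the classical probabilistic/permutation argument due to Bollob\'as (or in a streamlined form due to Katona). Let $X = \bigcup_{i \in I} (A_i \cup B_i)$ be the underlying ground set and take a uniformly random linear ordering $\pi$ of $X$. For each $i \in I$, define $E_i$ to be the event that every element of $A_i$ precedes every element of $B_i$ in $\pi$. Since $A_i$ and $B_i$ are disjoint and have fixed sizes $a$ and $b$, a routine calculation (conditioning on the positions occupied by $A_i \cup B_i$ and noting the only input is the relative order of the $a+b$ elements) gives
\[
\Pr[E_i] = \frac{a!\,b!}{(a+b)!} = \binom{a+b}{a}^{-1}.
\]

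The heart of the argument, and what I expect to be the main (though still short) obstacle, is to show that the events $E_i$ are pairwise \emph{disjoint}. Suppose for contradiction that $E_i$ and $E_j$ both occur for some $i \neq j$. By hypothesis~(2), there exist $x \in A_i \cap B_j$ and $y \in A_j \cap B_i$ (note $x \neq y$, since $x \in A_i$ is disjoint from $B_i \ni y$). Applying $E_i$ to the pair $x \in A_i$, $y \in B_i$ forces $\pi(x) < \pi(y)$; applying $E_j$ to the pair $y \in A_j$, $x \in B_j$ forces $\pi(y) < \pi(x)$. This is a contradiction, so the events $E_i$ are indeed pairwise disjoint.

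Combining these two observations, disjointness gives
\[
1 \geq \Pr\!\left[\bigcup_{i \in I} E_i\right] = \sum_{i \in I} \Pr[E_i] = \frac{|I|}{\binom{a+b}{a}},
\]
and rearranging yields $|I| \leq \binom{a+b}{a}$, as required. The only conceptual step is the disjointness argument above; the probability computation and the final union bound are entirely routine, so I would not dwell on them in the write-up.
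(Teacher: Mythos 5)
Your proof is correct: the computation $\Pr[E_i] = a!\,b!/(a+b)! = \binom{a+b}{a}^{-1}$ is right, the disjointness argument is exactly the point where hypothesis (2) is used and you use it correctly (including the observation that $x \neq y$, which needs $A_i \cap B_i = \emptyset$), and the degenerate cases $a=0$ or $b=0$ cause no trouble. Note that the paper itself gives no proof of this statement --- it is quoted as a classical result and only the tightness example is discussed --- so there is nothing to compare against; what you have written is the standard Katona-style random-permutation proof, and it is complete as it stands.
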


One can see that Theorem \ref{thm:original} is tight by taking $(A_i,B_i)_{i \in I}$ to be the collection of all partitions of a set of size $a+b$ into a subset $A_i$ of size $a$ and a subset $B_i$ of size $b$. Moreover, this is the unique way of achieving equality. There are numerous extensions and variations of this theorem and it has many applications.
\newline

In \cite{calbet}, I prove a new version of the Two Families Theorem, which I then use to prove bounds on the minimum number of edges in a $K_r$-saturated graph with given minimum degree. I believe that to prove tight bounds for this saturation problem a further generalisation of the Two Families Theorem is needed. The simplest special case of this generalisation that I cannot prove is as follows.

\begin{conj}\label{conj}

Let $b \geq a \geq 2$ be integers and $(A_i,B_i)_{i \in I}$ be a collection of pairs of finite sets, indexed by a finite set $I$, with the following properties.

\begin{enumerate}
    \item $|A_i| = a$, $|B_i| = b$ and $|A_i \cap B_i| = 2$ for all $i \in I$.
    \item $A_i \cap B_j \not \subseteq A_k \cap B_k$ for all $i,j,k \in I$ with $i \neq j$.
\end{enumerate}

Then $|I| \leq \sum_{i=2}^a 2^{i-2} \binom{a+b-2i}{a-i}$. 

\end{conj}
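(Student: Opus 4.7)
I would first verify the Pascal-style recurrence $f(a,b)=f(a-1,b)+f(a,b-1)$ for $b>a\ge 3$, where $f(a,b):=\sum_{i=2}^a 2^{i-2}\binom{a+b-2i}{a-i}$ denotes the right-hand side of the conjecture. This follows termwise from $\binom{a+b-2i}{a-i}=\binom{a+b-2i-1}{a-i}+\binom{a+b-2i-1}{a-i-1}$, once the edge term at $i=a$ is matched up. Coupled with the easy boundary $f(2,b)=1$, the recurrence strongly suggests a proof by induction on $a$, with an inner induction on $b$ for each $a$.

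The base case $a=2$ is immediate from the hypotheses: $|A_i|=|A_i\cap B_i|=2$ forces $A_i\subseteq B_i$, and then for any $j\ne i$ one has $A_i\cap B_j\subseteq A_i=A_i\cap B_i$, contradicting condition (2) with $k=i$; hence $|I|\le 1=f(2,b)$. For the inductive step with $b>a$, my strategy is to find an element $x$ of the ground set that lies in exactly one of $A_i$, $B_i$ for every $i\in I$, and split $I=I_A\sqcup I_B$ with $I_A=\{i:x\in A_i\setminus B_i\}$ and $I_B=\{i:x\in B_i\setminus A_i\}$. Deleting $x$ on $I_A$ preserves every centre $A_i\cap B_i$ and every cross-intersection $A_i\cap B_j$ (since $x\notin B_j$ for $j\in I_A$), yielding a family with parameters $(a-1,b)$ still satisfying both conditions; symmetrically for $I_B$ with parameters $(a,b-1)$. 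The inductive hypothesis and the recurrence then give $|I|\le f(a-1,b)+f(a,b-1)=f(a,b)$.

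The main obstacles are twofold. First, a splitting element $x$ of the required form need not exist in a generic family: if $x\in A_i\cap B_i$ for some $i$ then deleting $x$ destroys the centre, while if $x\notin A_i\cup B_i$ the reduction gains nothing; I would try to remove this by a compression/shifting step pushing the (extremal) family into a canonical form supported on $[a+b-2]$ in which a suitable $x$ always exists. Second, the recurrence invokes $f(a,a-1)$, which is outside the valid range $b\ge a$, so the diagonal $b=a$ needs a separate direct argument, and this is the case I expect to be hardest. A promising route is to first pin down the extremal construction: the shape of the bound suggests families parametrised by a nested chain of ``centres'' $C_2\subsetneq C_3\subsetneq\cdots\subsetneq C_i$ of sizes $2,3,\ldots,i$, where each newly added element receives a binary $A/B$ label (giving the factor $2^{i-2}$) and the remaining $a+b-2i$ coordinates split freely between the $A$-excess and the $B$-excess (giving $\binom{a+b-2i}{a-i}$). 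With such a construction in hand, one could attempt a matching upper bound via a random-permutation event of probability $1/f(a,b)$, with disjointness of the events for $i\ne j$ enforced directly by condition (2).
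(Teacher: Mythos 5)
You should first note that the paper does not prove this statement at all: it is presented explicitly as an open conjecture (``the simplest special case\dots that I cannot prove''), accompanied only by the tightness construction and the remark that the bound can be established under the extra hypothesis that all the $A_i$ and $B_i$ live in a ground set of size $a+b-2$. So there is no proof in the paper to compare against; the only question is whether your argument settles the conjecture, and it does not.

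The parts you actually carry out are correct: the recurrence $f(a,b)=f(a-1,b)+f(a,b-1)$ for $b>a\ge 3$, the base case $f(2,b)=1$ via $A_i\subseteq B_i$, and the local soundness of the splitting step (if $x$ lies in exactly one of $A_i,B_i$ for every $i$, then deleting $x$ on $I_A$ and on $I_B$ preserves all centres and all relevant cross-intersections, so both hypotheses survive with parameters $(a-1,b)$ and $(a,b-1)$). But the two obstacles you flag yourself are exactly where the conjecture lives, and neither is resolved. A splitting element must satisfy $x\in(A_i\cup B_i)\setminus(A_i\cap B_i)$ simultaneously for all $i$; already in the paper's extremal example this forces $x\notin\{1,\dots,2a-2\}$, so no such $x$ exists when $b=a$, and for a general family an element can lie outside $A_i\cup B_i$ for some $i$ (in which case deleting it gains nothing) or inside some centre $A_i\cap B_i$ (in which case deletion destroys that centre). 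The proposed remedy --- a compression pushing the family onto a ground set of size $a+b-2$ --- is precisely the reduction that would already finish the problem, since the paper's partial result covers that case; moreover, replacement-style compressions can create new containments $A_i\cap B_j\subseteq A_k\cap B_k$ and hence destroy hypothesis (2), and you offer no argument that they do not. The diagonal case $b=a$, unreachable by your recurrence, is likewise left entirely open, and the closing suggestion of a random-permutation event of probability $1/f(a,b)$ is only a hope: $f(a,b)$ is a sum rather than the reciprocal of any evident symmetric event, and no candidate event is defined. In short, the skeleton (recurrence plus splitting) is a sensible plan of attack, but the essential content --- producing the splitting element, or a substitute global argument --- is missing, so the conjecture remains open.
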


The following example shows that, if true, Conjecture \ref{conj} is tight. For each integer $2 \leq c \leq a$, let $(A_i)_{i \in I_c}$ be the collection of all sets
\begin{align*} \{A:\ \{2c-3,2c-2\} \subseteq A \subseteq \{1,2,3,\cdots,a+b-2\},\ |A|=a,\\
|A \cap \{2d-3,2d-2\}|=1\ \text{for all } 2 \leq d < c\}.\end{align*}
For each $i \in I_c$, let
$$B_i=\left(\{1,2,3,\ldots,a+b-2\} \setminus A_i\right) \cup \{2c-3,2c-2\}.$$
Let $I=\bigcup_{2 \leq c \leq a} I_c$. Then $(A_i,B_i)_{i \in I}$ satisfies the conditions in Conjecture \ref{conj} and $|I|=\sum_{i=2}^a 2^{i-2} \binom{a+b-2i}{a-i}$. \newline

I can prove Conjecture \ref{conj} under the additional assumption that the $A_i$ and $B_i$ are all subsets of a ground set of size $a+b-2$ (the smallest possible size of a set containing sets of size $a$ and $b$ whose intersection has size 2). Note that taking $k=i$ in condition 2, we obtain that $(A_i \setminus B_i, B_i)_{i \in I}$ satisfies the conditions in Theorem \ref{thm:original} with $a$ replaced by $a-2$. Hence $|I| \leq \binom{a+b-2}{a-2}$. When $a=3$, this bound matches the one in Conjecture \ref{conj}, but there are many different ways of achieving equality. \newline

See \cite{calbet} for more details. In particular, Problem 2 in \cite{calbet} and Definition 11 in \cite{calbet} give the full generalisation of the Two Families Theorem needed for the saturation application. See also Section 4 in \cite{scott} for a discussion of several different versions of the Two Families Theorem and their applications.

\pagestyle{empty}

\pagebreak

\section{Some problems on intersecting families of graphs (David Ellis).}

If $H$ is a fixed (unlabelled) graph, and $\mathcal{F}$ is a family of graphs on the vertex-set $[n]:=\{1,2,\ldots,n\}$, we say $\mathcal{F}$ is {\em $H$-intersecting} if $G_1 \cap G_2$ contains a copy of $H$, for any $G_1,G_2 \in \mathcal{F}$. The study of $H$-intersecting families of graphs was initiated by Simonovits and S\'os in 1976. They posed the natural problem of determining (or bounding) the function
$$g_H(n):=\max\{|\mathcal{F}|:\ \mathcal{F} \text{ is an }H\text{-intersecting family of graphs on }[n]\},$$
for various graphs $H$. It is trivial to see that for any graph $H$ with at least one edge, we have $g_H(n) \leq 2^{{n \choose 2}-1}$, since an $H$-intersecting family of graphs on $[n]$ cannot contain both a graph and its complement. However, improving on this `trivial' upper-bound turned out to be non-trivial. For the case where $H$ is $K_3$, the triangle, Simonovits and S\'os conjectured that $g_{K_3}(n) = 2^{{n \choose 2}-3}$ for all $n \in \mathbb{N}$, i.e.\ that a triangle-intersecting family of graphs on $[n]$ with the maximal size can be obtained by fixing a single triangle and taking all graphs on $[n]$ containing that triangle. The first progress on this was made by Chung, Frankl, Graham and Shearer \cite{cfgs} in 1986; they proved, using Shearer's entropy lemma, that $g_{K_3}(n) \leq 2^{{n \choose 2}-2}$ for all $n \in \mathbb{N}$, which is halfway between the `trivial' upper bound and the sharp upper bound conjectured by Simonovits and S\'os. The conjecture of Simonovits and S\'os was proved in 2011 by the author, Filmus and Friedgut \cite{eff}, using a discrete Fourier-analytic argument. In \cite{eff}, we conjecture an analogue of the Simonovits-S\'os conjecture for the clique $K_t$, for all $t \geq 4$.
\begin{conj}
\label{conj:eff}
    For all $t,n \in \mathbb{N}$ with $4 \leq t \leq n$,
    $$g_{K_t}(n) = 2^{{n \choose 2}-{t \choose 2}}.$$
\end{conj}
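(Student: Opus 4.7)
The natural plan is to generalise the discrete Fourier-analytic argument of Ellis--Filmus--Friedgut \cite{eff} (which handled $t=3$) to arbitrary $t \geq 4$. Identify a family $\mathcal{F}$ with its indicator vector in $\mathbb{R}^V$, where $V = \{0,1\}^{E(K_n)}$ is the set of all graphs on $[n]$. One seeks a symmetric real matrix $M$ on $V \times V$ with three properties: (1) $M(G_1,G_2) = 0$ whenever $G_1 \cap G_2$ is $K_t$-free; (2) $M$ is invariant under the diagonal $S_n$-action permuting the vertices of $[n]$ and has constant row sum $\lambda_1$; and (3) its smallest eigenvalue $\lambda_{\min}$ satisfies $-\lambda_{\min}/\lambda_1 = 1/(2^{\binom{t}{2}}-1)$. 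The Hoffman ratio bound applied to $M$ would then give
\[
|\mathcal{F}| \;\leq\; \frac{-\lambda_{\min}}{\lambda_1 - \lambda_{\min}}\cdot |V| \;=\; 2^{\binom{n}{2}-\binom{t}{2}},
\]
as desired.

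For the construction of $M$, the natural family to search within is $M = \sum_{H} c_H\,\mathbf{1}[H \subseteq G_1 \cap G_2]$, summed over isomorphism types of graphs $H$ containing a copy of $K_t$, with real coefficients $c_H$ to be chosen. Each summand factors over edges, so its Fourier decomposition on the cube $V$ is explicit, and $S_n$-equivariance groups the eigenspaces via the $S_n$-module structure of $\mathbb{R}^V$, reducing the spectral analysis to a manageable number of isotypic components. For $t=3$ this programme reduces to a short character calculation; the $c_H$ can be chosen essentially uniquely (up to scaling) and the sharp bound drops out. Following \cite{eff}, uniqueness in the bound would then force $1_{\mathcal{F}}$ into the sum of the extremal eigenspaces of $M$, and a further combinatorial argument would identify $\mathcal{F}$ as the family of all graphs containing some fixed $K_t$.

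The chief obstacle is property (3): producing coefficients $c_H$ so that the smallest eigenvalue of $M$ hits the sharp ratio $1/(2^{\binom{t}{2}}-1)$. For $t \geq 4$ the index set of graphs $H$ becomes much larger, the number of relevant $S_n$-isotypic components grows correspondingly, and the resulting system of linear constraints on the $c_H$ need not be solvable; it is a genuine possibility that no $S_n$-invariant weighting of intersection types achieves the sharp Hoffman ratio, in which case some non-linear refinement (a weighted or $p$-biased Hoffman bound, or a Lov\'asz-$\vartheta$-type semidefinite relaxation) would be needed. A sensible interim target is to first extend the Chung--Frankl--Graham--Shearer entropy method \cite{cfgs} to $K_t$-intersecting families --- which should yield a reasonable though non-sharp upper bound without much difficulty --- and then to close the remaining exponential gap by a junta/stability argument in the spirit of \cite{eff}, exploiting the fact that a $K_t$-intersecting family is automatically $K_{t-1}$-intersecting so a proof by induction on $t$ only needs to gain a factor of $2^{t-1}$ at each step.
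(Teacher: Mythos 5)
The statement you are addressing is an open conjecture, not a theorem of the paper: the paper records that it is known only for $t=4$ (by Berger and Zhao, via an adaptation of the Ellis--Filmus--Friedgut Fourier argument together with ``ingenious computational work'') and states explicitly that it remains open for every $t \geq 5$. Your proposal is a proof programme rather than a proof, and its central step --- the existence of an $S_n$-invariant matrix $M$ supported on pairs $(G_1,G_2)$ with $G_1 \cap G_2$ containing a $K_t$, whose Hoffman ratio equals $1/(2^{\binom{t}{2}}-1)$ --- is precisely the open problem. You acknowledge this yourself (``it is a genuine possibility that no $S_n$-invariant weighting of intersection types achieves the sharp Hoffman ratio''), so the argument as written establishes nothing for any $t\ge 5$. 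The paper's authors make the same assessment of this route: a Fourier-theoretic approach ``might work, but it seems very hard to make it work,'' and the fact that even the $t=4$ case required heavy computation is evidence that the linear system you describe does not resolve cleanly by hand.

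Your fallback suggestions also do not close the gap. The Chung--Frankl--Graham--Shearer entropy method does generalise --- the paper's Proposition records the bound $2^{\binom{n}{2}-t+1}$ for families whose pairwise intersections have chromatic number at least $t$, which covers the $K_t$-intersecting case --- but this loses only $t-1$ in the exponent rather than $\binom{t}{2}$, leaving a gap of a factor $2^{\binom{t}{2}-t+1}$, exponential in $t^2$. The induction-on-$t$ idea is arithmetically correct (one must gain a factor $2^{t-1}$ per step), but no mechanism for extracting even a constant factor from the extra intersecting hypothesis is known; indeed the paper emphasises that for the bipartite graph $K_{2,2}$ it is open to beat the trivial bound $2^{\binom{n}{2}-1}$ by \emph{any} absolute constant factor, which illustrates how resistant such ``gain a factor'' arguments are in this setting. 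In short, your proposal correctly identifies the known strategy and its obstruction, but supplies no new idea that overcomes the obstruction, and so it is not a proof of the conjecture.
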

This conjecture says that, for any $t \geq 4$, to obtain a largest possible $K_t$-intersecting family of graphs on $[n]$, one cannot do better than to take the `trivial' construction of fixing a single copy of $K_t$ and taking all graphs on $[n]$ containing that $K_t$.

Using an adaptation of the methods in \cite{eff}, together with some ingenious computational work, Berger and Zhao \cite{bz} proved Conjecture \ref{conj:eff} for $t=4$. However, the conjecture remains open for every value of $t \geq 5$. We have not ruled out that a Fourier-theoretic approach (similar to that in \cite{bz} and \cite{eff}) might work, but it seems very hard to make it work.

It is easy to adapt the above argument of Chung, Frankl, Graham and Shearer (in the triangle-intersecting case) to prove the following.
\begin{prop}
Let $t \geq 3$. If $\f$ is a family of graphs on the vertex-set $[n]$ such that for all $G,H \in \f$, $G \cap H$ has chromatic number at least $t$, then $|\f| \leq 2^{{n \choose 2}-t+1}$.
\end{prop}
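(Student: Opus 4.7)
The plan is to adapt the Shearer-entropy argument of Chung, Frankl, Graham and Shearer (which handled the case $t=3$), replacing ``triangle'' by ``chromatic number at least $t$''. For a map $c:[n]\to[t-1]$, write $M_c\subseteq\binom{[n]}{2}$ for the set of pairs monochromatic under $c$. The key observation is that a graph $J$ on $[n]$ is $(t-1)$-colourable iff $J\cap M_c=\emptyset$ for some $c$, so the hypothesis $\chi(G\cap G')\ge t$ for all $G,G'\in\f$ is equivalent to the statement that for every colouring $c:[n]\to[t-1]$ the projected family $\f_c:=\{G\cap M_c:G\in\f\}$ of subsets of $M_c$ is intersecting. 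The classical intersecting-family bound then gives $|\f_c|\le 2^{|M_c|-1}$, provided $M_c\ne\emptyset$.

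Let $X$ be uniform on $\f$, viewed as a random element of $\{0,1\}^{\binom{[n]}{2}}$, so that $H(X)=\log_2|\f|$, and write $X_A$ for the restriction of $X$ to coordinates in $A\subseteq\binom{[n]}{2}$. I would then apply Shearer's lemma to the family $\{M_c:c\in[t-1]^{[n]}\}$: a direct count gives that each edge lies in exactly $(t-1)^{n-1}$ of the $M_c$, so Shearer yields
\[
(t-1)^{n-1}\log_2|\f|\;\le\;\sum_{c\in[t-1]^{[n]}}H(X_{M_c}).
\]
Combining the bound $H(X_{M_c})\le\log_2|\f_c|\le|M_c|-1$ with the identity $\sum_c|M_c|=\binom{n}{2}(t-1)^{n-1}$ (the same edge-count as above) and dividing through by $(t-1)^{n-1}$ then gives exactly $\log_2|\f|\le\binom{n}{2}-(t-1)$, as required.

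The only point needing care is that $M_c\ne\emptyset$ for every colouring $c$. This fails iff $c$ is injective, which requires $t-1\ge n$; but in that regime $\chi(G)\le n<t$ forces $\f=\emptyset$, so we may assume $t\le n$, in which case each $M_c$ is nonempty by pigeonhole. I do not anticipate any serious obstacle: structurally the argument is identical to the $t=3$ case, the conceptual content being the translation of the chromatic condition into a family of per-colouring intersection conditions, which then makes Shearer's lemma fire with a perfectly uniform edge coverage.
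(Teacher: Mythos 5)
Your proof is correct and is precisely the adaptation the paper has in mind: the paper only remarks that the Chung--Frankl--Graham--Shearer entropy argument for triangles adapts, and your argument is exactly that adaptation, replacing the $2^n$ bipartitions of $[n]$ by the $(t-1)^n$ maps $c:[n]\to[t-1]$ and using that $\chi(J)\ge t$ iff $J$ meets every $M_c$. The uniform coverage count $(t-1)^{n-1}$, the intersecting-family bound $2^{|M_c|-1}$, and the handling of the degenerate case $t>n$ are all as they should be.
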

We conjecture the following strengthening of Conjecture \ref{conj:eff}.
\begin{conj}
Let $t \geq 3$. If $\f$ is a family of graphs on the vertex-set $[n]$ such that for all $G,H \in \f$, $G \cap H$ has chromatic number at least $t$, then $|\f| \leq 2^{{n \choose 2}-{t \choose 2}}$.
\end{conj}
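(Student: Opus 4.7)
The plan is to extend the discrete Fourier-analytic approach of Ellis--Filmus--Friedgut \cite{eff} (which settled Conjecture \ref{conj:eff} for $t=3$) and Berger--Zhao \cite{bz} (for $t=4$) from the clique-intersecting to the chromatic-intersecting setting. Identify graphs on $[n]$ with their indicator vectors in $\{0,1\}^{\binom{[n]}{2}}$ and consider the symmetric kernel $K_t(G,H):=\mathbb{1}[\chi(G\cap H)<t]$. The hypothesis of the conjecture is exactly that $K_t$ vanishes identically on $\f\times\f$ (including on the diagonal, which enforces $\chi(G)\ge t$ for each $G\in\f$). Hence a Delsarte-style eigenvalue argument should suffice: one would like to produce a quadratic form $Q(u)=\langle u,Lu\rangle$ on $L^2(\{0,1\}^{\binom{[n]}{2}})$ such that $Q(u)\ge 2^{\binom{n}{2}-\binom{t}{2}}(\mathbb{E} u)^2$ for every nonnegative $u$, while $Q(\mathbf{1}_\f)\le 0$ under the hypothesis; taking $u=\mathbf{1}_\f$ then yields the bound.

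The first step is to expand $K_t$ Fourier-theoretically. Although $K_t(G,H)$ depends on $G\cap H$ rather than $G\oplus H$ --- so one is not working with a Cayley graph and cannot directly diagonalise via characters --- one can nonetheless write out a spectral expansion using the identity that $\chi(J)<t$ iff $J\subseteq E_\pi$ for some partition $\pi$ of $[n]$ into at most $t-1$ parts (with $E_\pi$ the set of cross-edges). Inclusion-exclusion over nonempty collections of such partitions then expresses $K_t$ as a signed sum of terms that factorise over edges, and each such term expands cleanly in the basis $\chi_S(G)\chi_T(H)$, where $\chi_S(G)=(-1)^{|G\cap S|}$.

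The second step is to identify the ``extremal'' modes of this expansion. In \cite{eff,bz}, the extremal support is essentially given by the indicators of containment of a copy of $K_t$, which contribute the factor $2^{-\binom{t}{2}}$. For the chromatic version one would look for an analogous extremal certificate built from indicators ``$G$ contains a fixed chromatic-critical subgraph'', reducing the analysis to the clique case handled by \cite{eff,bz} together with error terms from non-clique obstructions such as Mycielski- or Kneser-type subgraphs.

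The main obstacle, in my view, is precisely controlling these non-clique obstructions: the chromatic condition is genuinely weaker than $K_t$-containment, so the M\"obius-type signed sums from the inclusion-exclusion above have more intricate cancellations than in the clique-intersecting case. Moreover, since Conjecture \ref{conj:eff} is itself open for $t\ge 5$, any proof of the stronger chromatic version must at least subsume that difficulty. A genuinely new ingredient --- perhaps a sharp bound on the $L^1$-mass of the Fourier transform of $\mathbb{1}[\chi(\cdot)<t]$ on low-weight characters, or a critical-subgraph reduction from the chromatic condition to a suitable clique condition --- seems necessary.
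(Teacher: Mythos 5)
There is a genuine gap here, and it is worth being precise about its nature. The statement you are asked to prove is stated in the paper as a \emph{conjecture}, not a theorem: the authors explicitly present it as an open strengthening of Conjecture \ref{conj:eff}, and they note that even much weaker statements (e.g.\ a bound of the form $2^{\binom{n}{2}-ct^2}$) remain open. The only thing actually proved in the paper in this direction is the much weaker bound $|\f|\le 2^{\binom{n}{2}-t+1}$, obtained by adapting the entropy argument of Chung, Frankl, Graham and Shearer. So there is no proof in the paper to compare yours against, and your text is, by your own account, not a proof either: it is a research programme. You correctly observe that the chromatic version subsumes Conjecture \ref{conj:eff} (since a family in which every pairwise intersection contains a $K_t$ certainly has every pairwise intersection of chromatic number at least $t$), and Conjecture \ref{conj:eff} is itself open for every $t\ge 5$; this alone shows that no argument of the kind you sketch can currently be completed.

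Concretely, the missing steps are exactly the ones that carry all the content of the Delsarte/Fourier method. You never construct the operator $L$ (equivalently, the pseudo-adjacency matrix or ``witness'') whose quadratic form $Q$ is supposed to satisfy both $Q(u)\ge 2^{\binom{n}{2}-\binom{t}{2}}(\mathbb{E}u)^2$ for nonnegative $u$ and $Q(\mathbf{1}_\f)\le 0$ under the hypothesis; in \cite{eff} and \cite{bz} the construction of this object, and the verification of its spectral properties, is the entire difficulty, and for $t=4$ it already required substantial computation. Your proposed expansion of $\mathbb{1}[\chi(J)<t]$ via inclusion-exclusion over partitions into at most $t-1$ parts is a correct identity, but it produces an exponentially large signed sum over collections of partitions with no control on cancellation, and you give no mechanism for extracting from it the extremal eigenvalue $2^{-\binom{t}{2}}$ or for bounding the contribution of non-clique $t$-chromatic obstructions (Mycielski-type graphs, odd-girth constructions, etc.), which is precisely where the chromatic hypothesis is strictly weaker than $K_t$-containment. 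As an outline of how one might attack the conjecture your sketch is reasonable and consistent with the authors' own remarks that a Fourier-theoretic approach has not been ruled out but seems very hard to make work; as a proof it establishes nothing beyond what was already known.
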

Somewhat embarrassingly, even the following weaker conjecture remains open.
\begin{conj}
There exists an absolute constant $c>0$ such that the following holds. Let $t \geq 3$. If $\f \subset \mathcal{P}({[n] \choose 2})$ is a family of labelled graphs on the vertex-set $[n]$ such that for all $G,H \in \f$, $G \cap H$ has chromatic number at least $t$, then $|\f| \leq 2^{{n \choose 2}-ct^2}$.
\end{conj}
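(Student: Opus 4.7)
My plan is to refine the Shearer-entropy argument behind the preceding Proposition, extracting more from the chromatic condition than the mere non-emptiness of pairwise intersections on random restrictions. Let $\phi:[n]\to[t-1]$ be a uniformly random $(t-1)$-colouring and let
$$E(\phi)\;=\;\bigcup_{c\in[t-1]}\binom{\phi^{-1}(c)}{2}$$
be its set of monochromatic edges; each edge of $\binom{[n]}{2}$ lies in $E(\phi)$ with probability $\frac{1}{t-1}$, so Shearer's entropy lemma yields $H(G)\leq (t-1)\,\mathbb{E}_\phi[H(G|_{E(\phi)})]$ for a uniform $G\in\f$. The Proposition uses only $H(G|_{E(\phi)})\leq |E(\phi)|-1$, which follows from $\f|_{E(\phi)}$ being intersecting (any $\phi$ for which $(G_1\cap G_2)\cap E(\phi)=\emptyset$ would be a proper $(t-1)$-colouring of $G_1\cap G_2$), costing one bit per colouring and producing the linear savings $t-1$.

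To reach savings of order $t^2$ I would exploit the structural strengthening that a graph of chromatic number $\geq t$ contains a $(t-1)$-core, and hence has at least $\binom{t}{2}$ edges. For $G_1,G_2\in\f$ this gives $|G_1\cap G_2|\geq\binom{t}{2}$, so
$$\mathbb{E}_\phi\bigl[\,|(G_1\cap G_2)\cap E(\phi)|\,\bigr]\;=\;\frac{|G_1\cap G_2|}{t-1}\;\geq\;\frac{t}{2}.$$
On a typical colouring, the restricted family $\f|_{E(\phi)}$ therefore has pairwise intersections of size $\Omega(t)$ on average rather than merely being non-empty. The aim is to upgrade this into a bound of shape $|\f|_{E(\phi)}|\leq 2^{|E(\phi)|-\Omega(t)}$ for a positive fraction of $\phi$, and feed this back through Shearer to obtain $\Omega(t^2)$ savings in total.

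\textbf{Main obstacle.} The principal difficulty is the gap between average and worst-case pairwise intersection. For $G_1\cap G_2=K_t$, splitting the $t$ vertices as evenly as possible across the $t-1$ classes produces a colouring leaving only one monochromatic edge, so no uniform pointwise lower bound of order $t$ is available, and the Frankl-style bound $|\mathcal{A}|\leq 2^{N-s}$ for $s$-intersecting $\mathcal{A}\subseteq 2^{[N]}$ degenerates to the case $s=1$ used in the Proposition. I would attack this by combining three ideas: (i) a concentration-and-surgery step, using that the monochromaticity indicators of any two edges are uncorrelated under $\phi$ (a short calculation gives $\mathrm{Var}_\phi|(G_1\cap G_2)\cap E(\phi)|=|G_1\cap G_2|\cdot\frac{t-2}{(t-1)^2}$), so that Chebyshev isolates the bad partitions and one hopes to excise them from the Shearer average at a controllable cost; (ii) an iterated Shearer using tuples of independent random colourings with a tailored Frankl-type bound on the deeper restrictions; and (iii) a chromatic decomposition based on the inequality $\chi(G_1\cap G_2)\leq\sum_{c=1}^{t-1}\chi((G_1\cap G_2)[V_c])$, which spreads the chromatic mass across the colour classes as $\sum_c\chi((G_1\cap G_2)[V_c])\geq t$, so that applying the CFGS triangle-intersecting bound recursively inside each clique $\binom{V_c}{2}$ might accumulate $\Omega(t)$ savings per colouring by summation. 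The hardest step will be the combinatorial bookkeeping needed to convert average-intersection information under a random restriction into a genuine pointwise Frankl-type estimate, since the extremal configuration $G_1\cap G_2=K_t$ is precisely the phenomenon that makes the original EFF conjecture so hard.
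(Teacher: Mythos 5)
The statement you are trying to prove is presented in the paper as an \emph{open conjecture} --- the authors explicitly say that ``somewhat embarrassingly, even the following weaker conjecture remains open'' --- so there is no proof in the paper to compare against. Your submission is, by its own admission, a research plan rather than a proof: the first paragraph correctly reconstructs the Chung--Frankl--Graham--Shearer argument behind the Proposition (random $(t-1)$-colouring, each edge monochromatic with probability $\frac{1}{t-1}$, the restricted family is intersecting because a properly $(t-1)$-colourable intersection would contradict $\chi(G_1\cap G_2)\geq t$, hence a saving of one bit per colouring and $|\mathcal{F}|\leq 2^{\binom{n}{2}-t+1}$), but everything beyond that is aspiration, and the gap you flag at the end is genuine and fatal to the plan as stated.

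Concretely, the step that fails is the passage from ``pairwise intersections meet $E(\phi)$ in $\Omega(t)$ edges \emph{on average over $\phi$}'' to ``$|\mathcal{F}|_{E(\phi)}|\leq 2^{|E(\phi)|-\Omega(t)}$ for a positive fraction of $\phi$.'' A Frankl-type bound for $s$-intersecting families requires \emph{every} pair of members of the restricted family to intersect in at least $s$ elements, and your concentration/excision idea (i) cannot deliver this: the set of ``bad'' colourings is defined per pair $(G_1,G_2)$, there are up to $2^{\Theta(n^2)}$ pairs, and no union bound is available. Worse, for the conjecturally extremal family (all graphs containing a fixed $K_t$) \emph{every} colouring $\phi$ that splits the $t$ special vertices nearly evenly leaves a pair whose restricted intersection is a single edge, so $\mathcal{F}|_{E(\phi)}$ is only $1$-intersecting for essentially all $\phi$; there is no positive fraction of colourings on which a pointwise $\Omega(t)$-intersecting hypothesis holds. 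Note that for this extremal family the desired inequality $\mathbb{E}_\phi[H(G|_{E(\phi)})]\leq \mathbb{E}_\phi[|E(\phi)|]-\binom{t}{2}/(t-1)$ \emph{is} true, but the saving comes from the entropy of the restricted distribution itself and is unevenly spread across colourings in a way that no intersecting-family black box captures; finding a usable substitute is exactly the open problem. Your ideas (ii) and (iii) are stated too vaguely to assess, but (iii) as written is circular: applying a CFGS-type bound ``recursively inside each clique $\binom{V_c}{2}$'' presupposes a quadratic-in-$\chi$ savings bound of the very kind being conjectured. In short: the reduction to the Proposition is correct, the target of $\Omega(t^2)$ savings is not reached, and the missing ingredient you identify is the actual content of the conjecture.
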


In \cite{cfgs}, Chung, Frankl, Graham and Shearer also conjectured that, for the path $P_3$ with three edges,
$$g_{P_3}(n) = 2^{{n \choose 2}-3}\quad \forall n \geq 4,$$
i.e., that the `trivial' construction of fixing a single path of length three and taking all graphs on $[n]$ containing that path, is also optimal for $P_3$-intersecting families. This was disproved in 2006 by Christofides \cite{christofides}, who gave an elegant construction in the case $n=6$ disproving it (taking a product-construction, one sees from this that the conjecture is false for all $n \geq 6$); see also \cite{ellis-survey} for details of this construction. A beautiful conjecture of Alon and Spencer, however, says that it should be possible to improve on the trivial upper bound for $P_3$, by an absolute constant factor.
\begin{conj}[Alon, Spencer]
    There exists an absolute constant $c>0$ such that $g_{P_3}(n) \leq (1-c)2^{{n \choose 2}-1
}$ for all $n \geq 4$.
\end{conj}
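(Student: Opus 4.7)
The plan is to phrase the Alon--Spencer conjecture as an independence-number bound on an associated conflict graph and attack it via Hoffman-type spectral methods, in the spirit of the approach of Ellis--Filmus--Friedgut \cite{eff} and Berger--Zhao \cite{bz} for $K_t$-intersecting families. Concretely, let $\Gamma_n$ be the graph on vertex set $\{0,1\}^{\binom{[n]}{2}}$ of all labelled graphs on $[n]$, with $G_1 \sim G_2$ whenever $G_1 \cap G_2$ contains no $P_3$, i.e.\ whenever $G_1 \cap G_2$ is a disjoint union of stars, triangles and isolated vertices. A $P_3$-intersecting family is precisely an independent set in $\Gamma_n$, so the conjecture reduces to $\alpha(\Gamma_n) \leq (1-c)\cdot 2^{\binom{n}{2}-1}$. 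Split the edges of $\Gamma_n$ as $E_\emptyset \sqcup E_{>0}$, where $E_\emptyset$ consists of the pairs $\{G,\overline{G}\}$ forming the complement perfect matching --- which on its own produces exactly the trivial bound --- and $E_{>0}$ collects conflicts with a nonempty $P_3$-free intersection; the latter are comparatively rare, since $P_3$-free graphs on $[n]$ number only $n^{O(n)}$.

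Next, I would exploit the $S_n$-invariance of $A = A_\emptyset + A_{>0}$: decompose $\mathbb{R}^{V(\Gamma_n)}$ into $S_n$-isotypic components and restrict attention to those hosting the extremal eigenvectors. On each such slice $A_\emptyset$ acts as a simple involution with eigenvalues $\pm 1$, and the task becomes to show that $A_{>0}$ contributes a strictly negative eigenvalue exceeding a fixed fraction of the expected degree, thereby driving the Hoffman ratio $-\lambda_{\min}/(d-\lambda_{\min})$ strictly below $1/2$. Each $P_3$-free isomorphism type $T$ contributes a finite-rank $S_n$-symmetric piece to $A_{>0}$ (roughly, the operator ``select any two supersets of a copy of $T$ meeting only in that copy''), so in principle the eigenvalue problem is finite-dimensional in each isotypic block and hence tractable, much as in \cite{eff,bz}.

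The main obstacle is that, unlike in the $K_t$ case, the conjecturally extremal $P_3$-intersecting family is not known --- Christofides' construction \cite{christofides} shows that the ``trivial'' fix-a-$P_3$ family is already suboptimal --- so there is no natural target eigenvector to guide the spectral calculation, and it is not even clear which isotypic component the sharp bound comes from. A plausible fallback is a stability/supersaturation argument: if $|\mathcal{F}| = (1-\varepsilon)\cdot 2^{\binom{n}{2}-1}$ for small $\varepsilon$, then $\mathcal{F}$ must select exactly one of $\{G,\overline{G}\}$ for all but an $O(\varepsilon)$-fraction of complementary pairs, and restricting this selector to a uniformly random $4$-vertex subset (where only $64$ induced graphs are possible and can be analysed by hand) should, with constant probability, exhibit a pair $G_1, G_2 \in \mathcal{F}$ whose restricted intersection is $P_3$-free. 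The technical crux, and where I expect essentially all the difficulty to lie, is upgrading these local near-violations from vanishing to constant density --- a supersaturation-type statement that converts ``rare local defects in a near-selector'' into a global $(1-c)$ deficit, in the spirit of removal lemmas but adapted to the graph-intersecting setting.
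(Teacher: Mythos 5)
You have not proved this statement, and neither does the paper: the Alon--Spencer bound is presented there as an open conjecture, with the surrounding discussion explicitly noting that beating the trivial bound $2^{\binom{n}{2}-1}$ by any constant factor seems very hard whenever the forbidden intersection pattern is bipartite. Your proposal is a research programme with two branches, and both have concrete gaps. On the spectral branch: the observation that $P_3$-free graphs are scarce (only $n^{O(n)}$ of them against $2^{\binom{n}{2}}$ graphs) works against you, not for you. The conflict edges $E_{>0}$ form an extremely sparse perturbation of the complementation matching $E_\emptyset$, whose Hoffman ratio is exactly $1/2$; to push the ratio below $1/2 - c$ you must assign the $E_{>0}$ edges enormous weight, and then controlling $\lambda_{\min}$ of the reweighted operator \emph{is} the open problem. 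In the triangle case of Ellis--Filmus--Friedgut the weighting is guided by a known extremal family and a known answer; here Christofides' construction shows the naive extremal family is wrong and no candidate for the truth is known, so there is no eigenvector ansatz to verify and no identified isotypic block to work in. You acknowledge this obstacle, but acknowledging it does not remove it.

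The fallback branch is logically broken rather than merely incomplete. If $\mathcal{F}$ is $P_3$-intersecting, then for each pair $G_1,G_2\in\mathcal{F}$ the intersection $G_1\cap G_2$ contains a $P_3$ \emph{somewhere} on $[n]$; finding a $4$-subset of $[n]$ on which the restricted intersection is $P_3$-free contradicts nothing, because the witnessing $P_3$ may live entirely outside the sampled $4$-set. Indeed, for a typical pair the witnessing copies occupy a vanishing fraction of $4$-sets, so your event ``the restriction is $P_3$-free'' happens with high probability even for genuinely $P_3$-intersecting families, and cannot be the engine of a contradiction. The supersaturation statement you would need --- that a family of size $(1-\varepsilon)2^{\binom{n}{2}-1}$ must contain a pair whose \emph{full} intersection is $P_3$-free --- is precisely the conjecture restated. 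Neither branch closes, and the statement remains open.
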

Part of the beauty of this conjecture is that it would characterize exactly for which fixed unlabelled graphs $H$ it holds that $g_{H}(n) = (1-o(1))2^{{n \choose 2}-1}$, i.e.\ for which $H$ one can construct an $H$-intersecting family of graphs on $[n]$ containing asymptotically half of all graphs on $[n]$. (It is easy to see that this can be done when $H$ is a forest of stars, and any graph $H$ that is not a forest of stars must contain either a path of length three, or a triangle.) 

However, again somewhat embarrassingly, even the following much weaker conjecture remains open.
\begin{conj}
\label{conj:bicliques}
    There exists $t \in \mathbb{N}$ and an absolute constant $c>0$ such that $g_{K_{t,t}}(n) \leq (1-c)2^{{n \choose 2}-1}$ for all $n \in \mathbb{N}$.
\end{conj}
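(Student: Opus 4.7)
My plan is to attack Conjecture \ref{conj:bicliques} via a combination of Fourier analysis on the edge-cube $\{0,1\}^N$ (with $N = \binom{n}{2}$) and a stability / junta-type reduction, taking $t$ to be some large fixed integer. Write $f = \mathbf{1}_{\mathcal{F}}$ and $\alpha = \mathbb{E}[f]$. The basic identity $\mathbb{E}_G[f(G) f(\bar{G})] = 0$, a consequence of $G \cap \bar{G} = \emptyset$ being $K_{t,t}$-free, transforms in Fourier to $\sum_S (-1)^{|S|} \hat{f}(S)^2 = 0$; combined with Parseval, this forces the even- and odd-level Fourier weights of $f$ each to equal $\alpha/2$, and recovers the trivial bound $\alpha \le 1/2$.

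The first step in trying to improve on $1/2$ is the richer constraint that for any $K_{t,t}$-free subset $R \subseteq \binom{[n]}{2}$ and any $G \in \mathcal{F}$, we have $\bar{G} \oplus R \notin \mathcal{F}$, since a direct check gives $G \cap (\bar{G} \oplus R) = G \cap R \subseteq R$, which is $K_{t,t}$-free. Equivalently, $\mathcal{F}$ is disjoint from every translate $\bar{\mathcal{F}} \oplus R$ with $R$ a $K_{t,t}$-free edge set; the case $R = \emptyset$ recovers the trivial bound, while larger $K_{t,t}$-free $R$ give many further forbidden regions. The goal is to combine the family of constraints $\mathbb{E}_G[f(G) f(\bar{G} \oplus R)] = 0$, indexed by $K_{t,t}$-free sets $R$, either via a direct Fourier aggregation or by averaging over a well-chosen weighting, and so push $\alpha$ strictly below $1/2$ by an absolute constant.

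An alternative route is a stability / junta argument: if $\alpha$ is very close to $1/2$, one would attempt to show that $f$ is $L^2$-close to a junta on a small set of edges $J$, and observe that any such junta-family must contain a copy of $K_{t,t}$ inside $J$, by considering pairs in $\mathcal{F}$ with complementary edge-sets on $\binom{[n]}{2} \setminus J$ so that any $K_{t,t}$ in the intersection is forced to lie in $J$. The main obstacle is the stability step: standard junta theorems (such as Friedgut's) require a priori bounds on total influence that are not obvious for $K_{t,t}$-intersecting families, and even after reducing to a bounded junta, bounding its density strictly below $1/2$ is essentially a finite version of the conjecture itself. Adapting the hypercontractive and spectral methods of Ellis, Filmus and Friedgut \cite{eff} and Berger and Zhao \cite{bz} from the $K_t$- to the $K_{t,t}$-intersecting setting seems the most plausible way in, but the bipartite structure of $K_{t,t}$ appears to genuinely complicate the Fourier analysis.
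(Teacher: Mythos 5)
This statement is a conjecture, not a theorem: the paper explicitly records it as open (``again somewhat embarrassingly, even the following much weaker conjecture remains open''), so there is no proof in the paper to compare against, and your submission is accordingly a research plan rather than a proof. The ingredients you state correctly are exactly the ones that recover the trivial bound and no more: the identity $\mathbb{E}_G[f(G)f(\bar G)]=0$ gives equal even and odd Fourier weight and hence $\alpha\le 1/2$, and your observation that $G\cap(\bar G\oplus R)=G\cap R$ for any $R$ is the standard translate trick. Neither step, as written, produces the required gain of an absolute constant factor.

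The concrete gap in your first route is the aggregation step, and there is a structural reason to expect it to fail in the form proposed. In the triangle case, Chung, Frankl, Graham and Shearer obtain a genuine constant-factor improvement because complete bipartite graphs are triangle-free and have $\Theta(n^2)$ edges, so the translates $\bar{\mathcal{F}}\oplus R$ move $\bar{\mathcal{F}}$ by a constant fraction of the coordinates of the edge-cube. By the K\H{o}v\'ari--S\'os--Tur\'an theorem, every $K_{t,t}$-free graph has only $O(n^{2-1/t})$ edges, so each constraint $\sum_S(-1)^{|S|}(-1)^{|S\cap R|}\hat f(S)^2=0$ differs from the $R=\emptyset$ constraint only through the sparse set $R$; you give no mechanism (weighting, positivity argument, or SDP-type aggregation) by which this vanishing-density family of perturbations forces $\alpha\le 1/2-c$ for an absolute $c>0$, and this is precisely the obstruction the paper alludes to when it says beating the trivial bound ``seems very hard indeed'' for bipartite $H$. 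Your second route has the gap you yourself identify: without an a priori influence or structure theorem for $K_{t,t}$-intersecting families there is no route to the junta approximation, and even granting it, bounding the junta's density below $1/2$ is the conjecture again in a bounded universe, so the reduction is circular as stated. The proposal is a reasonable survey of plausible attacks, but it does not establish the statement.
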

It seems very hard indeed to beat the `trivial' upper bound (by any absolute constant factor) when the fixed graph $H$ is bipartite. Proving Conjecture \ref{conj:bicliques} in the case $t=2$, i.e.\ for the 4-cycle (which of course implies all the other cases) would of course be particularly nice.

At the other extreme, it would be of great interest to characterise the set of unlabelled graphs $H$ such that for any $n \in \mathbb{N}$, $g_H(n) = 2^{{n \choose 2}-e(H)}$, or equivalently, for any $n \in \mathbb{N}$, a maximum-sized $H$-intersecting family of graphs on $[n]$ can be obtained by taking all graphs containing a fixed copy of $H$. (For brevity, we call this the {\em Erd\H{o}s-Ko-Rado property} for $H$.) The results and facts listed above imply that $H$ has the Erd\H{o}s-Ko-Rado property in the case when $H$ is an edge, a triangle or a $K_4$, but not when $H$ is a path of three edges, or a disjoint union of stars. Conjecture \ref{conj:eff} would imply that $K_t$ has the Erd\H{o}s-Ko-Rado property, for all $t \geq 5$. The author conjectured in 2012 that $H$ being 2-connected is sufficient to guarantee the Erd\H{o}s-Ko-Rado property, but this was recently disproved by Balogh and Linz \cite{bl}, who showed that when $H=K_{s,t}$ and $t>2^{2s}-2s-1$, we have $g_{H}(n) \geq (t+2s+1)^s 2^{{n \choose 2}-st-2s^2} > 2^{{n \choose 2}-e(H)}$ for all $n \geq t+3s$, showing that even $s$-connectivity does not suffice (for any $s \in \mathbb{N}$). We conjecture now that, to guarantee the Erd\H{o}s-Ko-Rado property, it suffices for $H$ to be a {\em critical graph} (meaning, all proper subgraphs of $H$ have smaller chromatic number than $H$).

\pagebreak

\section{No three points on a line (Joshua Erde).}

A classic problem of Dudney asks how many points can be placed in an $n \times n$ grid so that no three points lie on a common line. There is an obvious upper bound of $2n$, since each row contains at most two points, however, perhaps surprisingly, in general it is not known if this bound can be improved even by a constant additive factor.

On the other hand, a construction of Erd\H{o}s \cite{E51} shows that we can take $(1-o(1))n$ points --- the idea is to consider a \emph{modular parabola}, the set of points $(i,i^2) \mod n$ for prime $n$. Slightly better constructions are known, but they all have quite a rigid algebraic structure.

In the spirit of the work of Erd\H{o}s \cite{E41} on the density of infinite Sidon sets, and the more recent work of Sudakov, Tomon and Wagner \cite{STW22} on the density of antichains in the infinite Boolean lattice, where the density of optimal finite examples cannot be achieved in the limit, we conjecture that a similar phenomenon occurs in this setting.

More concretely, we make the following.
\begin{conj}
If $S \subseteq \mathbb{N}^2$ is such that no three points of $S$ lie on a common line, then
\[
\liminf_{n \to \infty} \frac{|S \cap [n]^2|}{n} =0.
\]
\end{conj}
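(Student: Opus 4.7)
My plan would be to attempt the conjecture by contradiction, imitating the strategy of Erd\H{o}s's $\liminf$ result for infinite Sidon sets (and the recent Sudakov--Tomon--Wagner theorem for infinite antichains in the Boolean lattice). Suppose, for some $c>0$, that $S \subseteq \mathbb{N}^2$ is no-three-collinear with $|S \cap [n]^2| \geq cn$ for all sufficiently large $n$; I aim to derive a contradiction by combining the no-three-collinear condition across many scales, extracting quantitative information that is unavailable at any single scale.

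The central tool would be a ``core'' of $S$ at a smaller scale. Fix a large parameter $N$ and let $R := S \cap [N]^2$, so $|R| \geq cN$ by assumption. The no-three-collinear hypothesis forces the $\binom{|R|}{2}$ lines through pairs of points of $R$ to be all distinct, and moreover each such line contains exactly those two points of $S$ in all of $\mathbb{N}^2$. Hence, for $n \gg N$, every $q \in S \cap [n]^2 \setminus R$ must avoid the union $\mathcal{L}(R)$ of those lines. A line joining two points of $[N]^2$ has primitive direction $(a,b)$ with $\max(|a|,|b|) \leq N$ and meets $[n]^2$ in $\Theta(n/\max(|a|,|b|))$ lattice points, so with careful book-keeping (and an incidence-geometric correction for pairwise intersections of these lines) one can estimate $|\mathcal{L}(R) \cap [n]^2|$. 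To handle small $c$, one would then iterate across a geometric sequence of scales $N_1 \ll N_2 \ll \cdots \ll n$, with cores $R_i := S \cap [N_i]^2$, accumulating forbidden lattice points from each $\mathcal{L}(R_i)$ and hoping that the combined forbidden set covers a $(1-o(1))$-fraction of $[n]^2$, which would force $|S \cap [n]^2| = o(n)$ and yield the desired contradiction.

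The principal obstacle is quantitative: at a single scale, the naive estimate gives $|\mathcal{L}(R) \cap [n]^2| = O(|R|^2 \cdot n/N) = O(c^2 N n)$, which is much smaller than $n^2$ whenever $N \ll n$, so a single core is grossly insufficient. Making the multi-scale version close requires controlling the overlap between the line systems $\mathcal{L}(R_i)$ at different scales and exploiting that distinct primitive directions are realised at different scales. This is essentially the same difficulty that blocks progress on improving the trivial bound of $2n$ at a single scale, and I expect it to be the crux of the problem. A possibly cleaner alternative route would be to adapt arithmetic-regularity or hypergraph-removal machinery to the geometric setting, viewing $S$ as a $3$-uniform hypergraph of collinear triples and exploiting the rigidity of an infinite positive-density $S$ --- though identifying the correct pseudorandomness notion for collinearity in $\mathbb{Z}^2$ appears itself to be a significant undertaking.
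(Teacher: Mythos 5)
This statement is an open conjecture: the paper offers no proof of it, and in fact the surrounding discussion warns that it may well be false, citing the construction of Nagy, Nagy and Woodroofe of a no-three-in-line set $S\subseteq\mathbb{N}^2$ with $\liminf_{n\to\infty}|S\cap[n]^2|(\log n)^{1+\epsilon}/n>0$ for every $\epsilon>0$, together with computational evidence that a greedy construction may achieve $|S\cap[n]^2|=(1+o(1))\,0.8n$. So any purported proof must at the very least be compatible with density decaying slower than any power of $\log n$; this already rules out the kind of crude covering estimates your sketch relies on.

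Beyond that, your strategy has a gap that is fatal as stated, not merely ``quantitative book-keeping''. The forbidden set $\mathcal{L}(R)\cap[n]^2$ you construct from a core $R=S\cap[N]^2$ has size $O(c^2Nn)$, and even accumulating over a geometric sequence of scales up to $n$ the total is $O(c^2n^2)$ --- a set whose complement in $[n]^2$ still has $\Omega(n^2)$ points when $c$ is small. Excluding a subset of $[n]^2$ of size $o(n^2)$, or even of size $(1-\delta)n^2$ for fixed $\delta>0$, places no upper bound whatsoever on $|S\cap[n]^2|$ beyond the trivial $2n$: the $cn$ points of $S\cap[n]^2$ can comfortably sit inside the uncovered region, and the only tool you have for bounding how many points of $S$ lie in the uncovered region is the no-three-in-line condition itself, which is what you started with. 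So the argument is circular unless the forbidden set has much more structure (for instance, covering all but $o(n)$ full rows or columns), and nothing in the construction of $\mathcal{L}(R)$ provides that. The analogy with the Sidon-set and infinite-antichain results is suggestive as motivation, but those proofs exploit additive or poset structure that has no known geometric counterpart here; your proposal does not supply one, and given the evidence in the paper the more promising direction may be to look for a disproof.
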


The author notes that this is perhaps a `bad' conjecture, in that it may be false. Indeed, recently Nagy, Nagy and Woodroofe \cite{NNW23} gave a construction of a set $S \subseteq \mathbb{N}^2$ such that
\[
\liminf_{n \to \infty} \frac{|S \cap [n]^2| (\log n)^{1+\epsilon}}{n} > 0
\]
for any $\epsilon >0$, and worse still, computational analysis suggests that even a greedy approach may well achieve $|S \cap [n]^2| = (1+o(1)) 0.8n$ as $n \to \infty$. However, it would be interesting to disprove the conjecture (if it is false), and if it can be disproved, finding an explicit construction which disproves it also seems to be an interesting (and difficult) thing to do!

\pagestyle{empty}

\pagebreak

\section{Asymmetry of transition probabilities in the two-coloured torus (Ron Gray and Robert Johnson).}

Bollob\'as and Leader famously proved edge-isoperimetric inequalities in the grid \cite{BL1} and in the discrete torus \cite{BL2}, determining which subsets of given size minimise the edge-boundary\footnote{To be completely precise, the inequality in \cite{BL2} is sharp for all set sizes; the inequality in \cite{BL1} is not sharp for every set size, but it is sharp for many set sizes.}. This problem is about a different style of extremal question also related to measures of boundary of subsets of the torus.

In a regular graph, the edge-boundary is closely related to the number of internal edges. It is also easy to see that in any partition of the vertex set of a regular graph into equal sized parts, the two parts have the same number of internal edges. However, if instead of internal edges we count walks of length two that stay within one part, then this symmetry disappears. How extreme can the asymmetry be?

More precisely, let $\mathbb{T}_n^k$ denote the graph of the $k$-dimensional discrete torus, i.e.\ the graph with vertex-set $\mathbb{Z}_n^k$, where $x$ is joined to $x \pm e_i$ for all $i \in [k]$ and all $x \in \mathbb{Z}_n^k$, with $e_i$ denoting the unit vector in the $i$th direction, as usual. Take a red-blue colouring of the vertices of $\mathbb{T}_n^k$ with an equal number of vertices in each colour. Let $p_R$ be the probability that a random walk starting from a random red vertex stays within the red class for at least two steps, and let $p_B$ be the corresponding probability for blue. For which pairs $(x, y)\in [0,1]^2$ is there such a colouring with $(p_R,p_B)=(x,y)$?

In \cite{GJ} it is shown that for $k=2$ the answer is essentially (once the appropriate large $n$ limit has been defined) the convex hull of the set:
\[
\left\{\left(0,0\right),\left(\frac{1}{2},\frac{1}{4}\right),\left(\frac{3}{4},\frac{9}{16}\right),\left(\frac{1}{4},\frac{1}{2}\right),\left(\frac{9}{16},\frac{3}{4}\right),\left(1,1\right)\right\}
\]

Our main question is: what happens for $k\geq 3$? A tentative conjecture is that the answer is the convex hull of the set: 
\[
\left\{(0,0)\right\}\cup\left\{\left(\frac{l}{2k},\frac{l^2}{(2k)^2}\right):k \leq l \leq 2k\right\}\cup\left\{\left(\frac{l^2}{(2k)^2},\frac{l}{2k}\right):k \leq l \leq 2k\right\}.
\]

Another interesting generalisation is to consider the corresponding pairs of probabilities for $t$-step walks with $t\geq 3$. Here, even the $k=2$ case is open. 

See \cite{GJ} for more details, partial results and background.

\pagestyle{empty}

\pagebreak

\section{Some questions in Ramsey theory, poset saturation and graph colouring (Maria-Romina Ivan).}

\subsection*{Ramsey Theory}

\begin{question}[\cite{wordsMRI}] Suppose the natural numbers are coloured with finitely many colours. Must there exist a
sequence $(y_n)_{n=1}^{\infty}$ such that every sum $a_1 y_1 + a_2 y_2 +
\cdots + a_k y_k$
with $a_i \in \{1,2\}$ for all $i$ and $a_1=a_k=1$, has the same
colour?\end{question}

This question arose from the work done in \cite{wordsMRI}, where it was required to build a special colouring of $\mathbb{N}^2$. We wondered if we could colour $(a,b)$ by first colouring the naturals and giving the colour of $a+b$ to $(a,b)$. This reduced to the above question, which remains unanswered despite the fact that the original question concerning $\mathbb{N}^2$ was solved in \cite{wordsMRI}.

\begin{question}[F. Richter, {\em Ultramath} 2022, Pisa.] Suppose the natural numbers are coloured with finitely many colours. Must there exist two infinite sets of natural numbers, $A$ and $B$, such that the set
$$\{ab:\ a\in A,\ b\in B\} \cup \{a+b:\ a \in A,\ b \in B\}$$
is monochromatic?
\end{question}

This question does not need much introduction. It is extremely natural, in the light of the wonderful results of Moreira \cite{M} that $\{x, x+y, xy\}$ is partition regular over the natural numbers, and of Bowen and Sabok \cite{MB} that $\{x,y,x+y, xy\}$ is partition regular over the rational numbers. The above question is a possible `infinitary' result in the spirit of these.

\subsection*{Poset Saturation}

\begin{question}[Ferrara, Kay, Kramer, Martin, Reiniger, Smith and Sullivan, \cite{fk}] Let $\mathcal{F}$ be a family of subsets of $[n]$ that does not contain a {\em diamond} as an induced poset (meaning, $\mathcal{F}$ does not contain four distinct sets $A$, $B$, $C$ and $D$ with $A \subset B \cap C$, $B \cup C \subset D$, $C \not\subset B$ and $B \not\subset C$). Assume further that $\mathcal{F}$ is maximal with respect to the above condition, i.e.\ that for any $A\notin\mathcal F$, the family $\mathcal F\cup\{A\}$ does contain a diamond as an induced poset. Is it true that $|\mathcal{F}| = \Omega(n)$?
\end{question}

It is easy to see that the set of all singletons, together with the empty set, is a family satisfying the above conditions, and it has size $n+1$. The same holds for the full chain, $\{\emptyset, 1, 12, 123,\ldots,[n]\}$.

This question falls within the area of study known as `poset saturation'. A family $\mathcal{F}$ with the two properties above (diamond-free, and maximal subject to this condition) is said to be {\em diamond-saturated}. All that is known about this question so far is that the size of a diamond-saturated family is at least $\sqrt n$; it is believed that it must be at least linear. It is even open to show that it must be $\omega(\sqrt n)$. See \cite{diamond,martin2019improved} for more details. 

\subsection*{Graph Colouring}

\begin{question} [Bonamy, Botler, Dross, Naia and Skokan, \cite{rainbow}] Suppose that the edges of an $n$-vertex graph are coloured in such a way that no two incident edges have the same colour. Can we always cover the edge-set of the graph with $O(n)$ rainbow paths (meaning, paths that do not use the same colour twice)?
\end{question}

Very little is known about this natural question (see \cite{rainbow}). If one greedily removes the longest rainbow path, one obtains a covering of the edge-set of an $n$-vertex graph with $n\log n$ rainbow paths. Can one do better? This question is particularly intriguing because one cannot work with `small' cases to get a feel for the problem.

\pagebreak

\section{Antipodal paths in $Q_n$ with few colour changes (Maria-Romina Ivan and Robert Johnson).}

\begin{conj} [Leader and Long, \cite{geodesics}] Suppose the edges of the hypercube $Q_n$ are coloured with two colours, red and blue. Then there exists a geodesic path between two antipodal vertices, with at most one colour-change.

Equivalently, if the edges are 2-coloured `antipodally' (meaning that the edges $vw$ and $\bar{v}\bar{w}$ always have different colours, where for a vertex $v$, $\bar{v}$ is vertex disagreeing with $v$ on all coordinates), there exists a monochromatic geodesic path between two antipodal vertices.
\end{conj}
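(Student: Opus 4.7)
The plan is to work with the equivalent antipodal formulation---in any $2$-colouring $c$ of $E(Q_n)$ with $c(vw) \neq c(\bar v \bar w)$ for every edge, find a monochromatic geodesic between some pair of antipodes---and to proceed by induction on $n$, using the conjecture (in either of its equivalent forms) as inductive hypothesis. The small cases $n \leq 3$ can be handled by a direct case-check.

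For the inductive step, split $Q_n$ along coordinate $n$ into $B = \{v : v_n = 0\}$ and $T = \{v : v_n = 1\}$, each a copy of $Q_{n-1}$. The antipodal map on $Q_n$ swaps $B$ with $T$, so the colouring on $T$ is the antipodal image of that on $B$, and the $2^{n-1}$ direction-$n$ cross edges form $2^{n-2}$ antipodal pairs with opposite colours; in particular, exactly half of the cross edges are red and half are blue. Note that the restriction of $c$ to $B$ is a general $2$-colouring of $Q_{n-1}$, typically not itself antipodal in the $Q_{n-1}$ sense, so one cannot apply the antipodal form of the inductive hypothesis directly on $B$---but one can apply the general form.

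Doing so, the inductive hypothesis yields a geodesic in $B$ from some $u$ to its $Q_{n-1}$-antipode $u^{(B)}$ with at most one colour change. Since $u^{(B)}$ and the full $Q_n$-antipode $\bar u$ differ only in coordinate $n$, concatenating the $B$-geodesic with the cross edge $u^{(B)} \to \bar u$ gives a length-$n$ geodesic in $Q_n$ from $u$ to $\bar u$ with at most two colour changes. To pin this down to at most one change one needs a synchronisation: either the $B$-geodesic is already monochromatic (any cross-edge colour is then fine) or its last edge matches the cross-edge colour. Varying $u$ over the $2^{n-2}$ antipodal pairs in $B$ and exploiting the fact that exactly half the cross edges are red, I would attempt a double-counting or parity argument to exhibit some $u$ for which the synchronisation holds.

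The main obstacle is precisely this synchronisation step. The inductive hypothesis gives no control over the final colour of the near-monochromatic $B$-geodesic, and an adversarial colouring on $B$ could in principle arrange that for every choice of $u$ the geodesic ends in the ``wrong'' colour for the prescribed cross edge. To defeat this, one would probably need to strengthen the inductive statement---for example, to produce near-monochromatic antipodal geodesics with a prescribed end colour---though it is far from clear such a strengthening is true. If the direct approach fails, a more global argument seems called for: candidates include exploiting the antipodal degree identity $d_R(v) + d_R(\bar v) = n$ forced on the red subgraph, a sign/parity computation on cycles in an auxiliary graph of geodesics in the spirit of Laufer's lemma for $1$-factorisations, or a topological argument on the permutohedron parametrising the geodesics from $u$ to $\bar u$. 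Given how long the conjecture has resisted, I expect that the real difficulty will be in identifying such a global structural idea rather than in any local gymnastics.
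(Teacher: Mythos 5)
There is no proof to compare against: the statement you are trying to prove is an \emph{open conjecture} of Leader and Long, and the paper presents it as such. The best known results in its direction are far weaker --- a monochromatic geodesic of length at least $n/2$ (hence an antipodal geodesic with at most $n/2$ colour changes), improved by Dvo\v{r}\'ak to $\bigl(\tfrac{3}{8}+o(1)\bigr)n$ colour changes. So any complete argument here would be a substantial new theorem, and your proposal, as you yourself concede, is not one: it is a plan with an unfilled hole at the decisive step.

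To make the gap concrete: the synchronisation step fails for a reason slightly worse than the one you identify. You propose to vary $u$ over the $2^{n-2}$ antipodal pairs of the bottom face $B \cong Q_{n-1}$ and use the fact that half the cross edges are red to find one pair where the colour of the cross edge matches the final colour of the near-monochromatic geodesic in $B$. But the inductive hypothesis (in its general form) only guarantees a geodesic with at most one colour change for \emph{some single} antipodal pair of $B$ --- not one for each pair. An adversarial colouring of $B$ can leave most antipodal pairs with no near-monochromatic geodesic at all, so there is no family of candidate geodesics to double-count or average over; the parity of the cross-edge colours cannot be brought to bear on a single object. Strengthening the induction to demand a prescribed end colour, as you suggest, is not known to be true and would itself be a result at least as strong as the conjecture. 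Your closing instinct is the right one: the obstruction is structural, and the problem is believed to require a genuinely global idea rather than a face-splitting induction.
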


Here, a {\em geodesic path} means a shortest path in $Q_n$, i.e.\ one in which no direction is repeated. Note that a geodesic path between antipodal vertices must have length $n$. The equivalence of the two statements in the above conjecture is an easy exercise.

This conjecture strengthens a conjecture of Norine \cite{norine}, that if the edges of $Q_n$ are antipodally coloured with two colours, then there exists a monochromatic path of some length between two andipodal vertices. It also strengthens a conjecture of Feder and Subi \cite{fs}, that if the edges of $Q_n$ are 2-coloured, there exists a path of some length between two antipodal vertices, with at most one colour-change (this conjecture of Feder and Subi implies Norine's conjecture).

As a step towards this Conjecture Leader and Long \cite{geodesics} pose the following question.

\begin{question} 
\label{q:o(n)}
Is it true that for every red-blue colouring of the edges of $Q_n$, there exists a path between two antipodal vertices that changes colour $o(n)$ times?
\end{question} 

It follows from a result of Leader and Long \cite{geodesics} that there is always a monochromatic geodesic path of length at least $n/2$. One can also build a colouring where any geodesic path that starts in the largest monochromatic component, changes colour at least $\Omega(n)$ times; this makes the question even more intriguing.

This result immediately implies that there is a geodesic path between two antipodal vertices, with at most $n/2$ colour-changes. Dv\u{o}r\'ak \cite{Dvorak} improved this to $\left(\frac{3}{8}+o(1)\right)n$ colour-changes.

One ingredient of Dvorak's argument is careful analysis of geodesics between \emph{all} antipodal pairs of a $Q_3$. Somewhat motivated by this, we suggest the following variant of the problem. This variant may be more amenable to a proof by induction, than the original problem.

\begin{question}
How large can be the minimum number of colour-changes on a path between a randomly chosen pair of antipodal vertices? In other words, determine the maximum (over all edge colourings of $Q_n$) of the quantity:
\[
\frac{1}{2^n}\sum_{v\in Q_n}\left(\text{the minimum number of colour changes on a $v$-to-$\overline{v}$ path}\right)
\]
\end{question}

One could also pose the same question for geodesic paths. Colouring the hypercube in layers shows that the quantity in the above question must be at least $c\sqrt{n}$, where $c>0$ is an absolute constant. If this is tight, then certainly some antipodal pair has a path between them with $c\sqrt{n}$ colour-changes, giving a positive answer to Question \ref{q:o(n)}

Another direction is the extension to colourings of the edges of $Q_n$ with more than two colours. The analogue of Leader and Long's conjecture for more than two colours, would be the following.

\begin{conj}
Let $k \in \mathbb{N}$. In any $k+1$-colouring of the edges of $Q_n$, there exists a geodesic path between two antipodal vertices, with at most $k$ colour-changes.
\end{conj}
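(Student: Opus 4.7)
My plan is induction on $k$. The base case $k=1$ is precisely the Leader--Long conjecture stated earlier in this section, so any proof of the present conjecture must simultaneously settle that case; the plan below therefore addresses the inductive step.

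Assume the result for $k-1$ and consider a $(k+1)$-colouring $c$ of $E(Q_n)$. The strategy is to \emph{spend one colour}, say colour $k+1$, on an initial monochromatic segment of the geodesic, and to use the inductive hypothesis to complete the path with the remaining $k$ colours. Concretely, I would seek a subset $S\subseteq[n]$, an ordering of $S$, and a starting vertex $v$, such that: (i) the walk from $v$ flipping the coordinates of $S$ in this order uses only edges of colour $k+1$; and (ii) the subcube $R$ of directions $[n]\setminus S$ through $u=v+\sum_{i\in S}e_i$ contains no edge of colour $k+1$. The restriction of $c$ to $R$ is then a $k$-colouring of a copy of $Q_{n-|S|}$, and the inductive hypothesis produces a geodesic from $u$ to its $R$-antipode with at most $k-1$ colour-changes; concatenation with the initial segment yields a geodesic from $v$ to $\bar v$ with at most $k$ colour-changes.

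The hard part is producing the pair $(v,S)$. This is essentially the structural claim that every $(k+1)$-colouring of $Q_n$ contains a colour class ``dominant enough'' to give both a long monochromatic geodesic segment and a colour-avoiding subcube at its endpoint. Note that the extreme case $|S|=n$ already demands a monochromatic antipodal geodesic, which is not even known for $k=1$, so this structural claim is at least as strong as Leader--Long. One possible workaround is to strengthen the inductive hypothesis to a \emph{flexible starting-vertex} version --- namely, that from every vertex of $Q_n$ there is an antipodal geodesic with at most $k-1$ colour-changes --- which would be easier to iterate along the coordinate directions, though it may well be stronger than the truth. Another avenue is an averaging or probabilistic choice of $(v,S)$; however, adversarial colourings that concentrate colour $k+1$ on a ``diagonal'' substructure seem able to defeat purely random approaches, so some spectral or Fourier-analytic input, in the spirit of the analysis used by Dv\u{o}r\'ak to obtain his $(3/8+o(1))n$ bound for $k=1$, is likely to be needed to convert the pigeonhole advantage of a dominant colour class into a genuine geometric statement about subcubes.
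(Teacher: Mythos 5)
There is no proof to compare against: the statement you are attempting is one of the open conjectures of this collection, not a theorem of the paper. Indeed the authors explicitly entertain the possibility that it is \emph{false} (``perhaps this conjecture is too strong and it makes more sense to try to disprove it''), and even its $k=1$ case is the open Leader--Long conjecture. So any submission here should be judged as a research plan, and as a plan yours has two essential gaps that you have correctly identified but not closed. First, the induction has no base: $k=1$ is Leader--Long, which is open, and starting instead from the trivial $k=0$ case only relocates the difficulty into the first inductive step. Second, and more seriously, the structural claim that drives the inductive step --- the existence of a vertex $v$ and a direction set $S$ such that flipping $S$ from $v$ is monochromatic in colour $k+1$ \emph{and} the complementary subcube through the endpoint contains no edge of colour $k+1$ at all --- is far stronger than anything currently known. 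Requiring an entire subcube of dimension $n-|S|$ to avoid a colour is generically impossible unless $|S|$ is very close to $n$, at which point you are demanding a long (or full) monochromatic antipodal geodesic, which is precisely the content of the conjectures of Norine and of Leader--Long. So the reduction is circular in its hardest case rather than genuinely inductive.

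A smaller point on the accounting, which is the one part of the plan that does work: if such a pair $(v,S)$ with $|S|\geq 1$ existed, the concatenation would indeed give at most $0+1+(k-1)=k$ colour-changes, and if $S=\emptyset$ the inductive hypothesis alone gives $k-1$. But note that your proposed strengthening to a ``flexible starting-vertex'' version (an antipodal geodesic with few colour-changes from \emph{every} vertex) is provably too strong already for $k=1$: the paper records that one can build $2$-colourings in which every geodesic starting in the largest monochromatic component changes colour $\Omega(n)$ times, so no all-starting-points version of the conjecture can hold. If you want to pursue this problem, the averaged question the authors pose (the expected minimum number of colour-changes over a random antipodal pair) is probably the right weakening of your flexible-start idea, and their $c\sqrt{n}$ lower bound from layered colourings tells you what any such averaging argument must beat.
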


Any colouring of the edges of $Q_n$ in which all edges in the same direction have the same colour shows that this would be best-possible. More colours gives more scope for complicated colourings, so perhaps this conjecture is too strong and it makes more sense to try to disprove it. Is there some $k$ and $n$ and a $(k+1)$-colouring of the edges of $Q_n$ in which every antipodal path has strictly more than $k$ colour-changes? For an even more ambitious disproof, could there be some fixed $k$ for which the smallest number of colour-changes in an antipodal path (or geodesic) in a $(k+1)$-coloured $Q_n$ is unbounded as $n$ grows?

\pagebreak

\section{Convex $S$-Kakeya sets (Barnab\'as Janzer).}
	
	A subset $K$ of $\mathbb{R}^d$ is called a \emph{Kakeya set} or \emph{Besicovitch set} if it contains a unit line segment in all possible orientations, i.e., whenever $v\in \mathbb{R}^d$ with $\|v\|_2=1$, there exists $x\in \mathbb{R}^d$ such that for all $\lambda\in [0,1]$ we have $x+\lambda v\in K$. The main foundational results about such sets were proved by Besicovitch~\cite{besicovitch1,besicovitch1928kakeya}, who answered a question originating from Kakeya by showing that, surprisingly, there exist sets in $\mathbb{R}^2$ of arbitrarily small measure in which a unit line segment can be continuously moved and rotated around by $360^\circ$, and also showed that there exist Kakeya sets of measure zero in the plane. Since then there has been a lot of interest in Kakeya sets and related problems in various areas of mathematics. Here we consider a geometric problem of this kind raised by H.~Croft.
	
	Generalising the notion of Kakeya sets, given non-empty sets $S$ and $K$ in $\mathbb{R}^d$, let us say that $K$ is \emph{$S$-Kakeya} if $K$ contains a copy of $S$ in all possible orientations, i.e., if whenever $\rho\in \operatorname{SO}(d)$ is a rotation, there exists $x\in \mathbb{R}^d$ such that $\rho(S)+x\subseteq K$. Note that when $S$ is a unit line segment, we get the usual notion of Kakeya sets. Croft asked the following question: if $S$ and $K$ are convex bodies in $\mathbb{R}^d$, and $K$ is $S$-Kakeya, does it follow that any $S$-copy in $K$ can be continuously rotated into any other $S$-copy in $K$? (This question can be formalised by saying that \emph{any two $S$-copies can be rotated into each other within $K$} if whenever $\rho_0,\rho_1\in \operatorname{SO}(d)$ are rotations and $x_0,x_1\in \mathbb{R}^d$ are such that $\rho_i(S)+x_i\subseteq K$ (for $i=0,1$), there exist continuous functions $\gamma:[0,1]\to \operatorname{SO}(d)$ and $\delta: [0,1]\to \mathbb{R}^d$ such that $\gamma(i)=\rho_i,\ \delta(i)=x_i$ for $i=0,1$, and $\gamma(t)(S)+\delta(t)\subseteq K$ for all $t\in[0,1]$. By a {\em body}, we mean a non-empty compact set.)
	
	In~\cite{janzer2022rotation}, it was shown that the answer to Croft's question is negative in general. However, for some very natural choices of $S$ (such as unit line segments) we do have that any two $S$-copies can be rotated into each other inside convex $S$-Kakeya bodies. For brevity, given $S\subseteq\mathbb{R}^d$, let us say that $S$ has the \emph{Kakeya rotation property} if whenever $K$ is a convex $S$-Kakeya body in $\mathbb{R}^d$, any two $S$-copies can be rotated into each other within $K$. The results of~\cite{janzer2022rotation} can now be summarised as follows.
	
	\begin{theorem}[\cite{janzer2022rotation}]\label{kakeyaknown}
		Let $S$ be a convex body in $\mathbb{R}^d$. If $d=2$ (and $S$ is arbitrary), or if $S$ is a unit line segment (and $d\geq 2$ is arbitrary), then $S$ has the Kakeya rotation property.
		
		However, for each $d\geq 4$, there exists a convex body $S$ in $\mathbb{R}^d$ which does not have the Kakeya rotation property.
	\end{theorem}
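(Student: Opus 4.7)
The right framework is the configuration space
\[
\mathcal{T} = \mathcal{T}(S,K) = \{(\rho,x) \in \operatorname{SO}(d)\times\mathbb{R}^d : \rho(S)+x \subseteq K\}.
\]
The $S$-Kakeya hypothesis says exactly that the projection $\pi:\mathcal{T}\to\operatorname{SO}(d)$ is surjective, and ``any two $S$-copies can be rotated into each other within $K$'' is exactly path-connectedness of $\mathcal{T}$. For each $\rho$, the fibre $F_\rho = K\ominus\rho(S) = \bigcap_{s\in\rho(S)}(K-s)$ is convex (as an intersection of translates of $K$) and nonempty (by Kakeya), so every fibre is path-connected; the whole question is how the fibres glue together.

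For the two positive cases, my plan is to produce a continuous section $\sigma:B\to\mathcal{T}$ over a path-connected base $B$ by letting $\sigma(\rho)$ be the Steiner point of $F_\rho$. Take $B=\operatorname{SO}(2)\cong S^1$ in the $d=2$ case, and $B=S^{d-1}$ in the segment case (a unit segment is determined by its direction up to its own rotational stabiliser, so the base collapses to the sphere of directions); both are path-connected. Given $(\rho_0,x_0),(\rho_1,x_1)\in\mathcal{T}$, concatenate the straight segment from $x_0$ to $\sigma(\rho_0)$ inside the convex fibre $F_{\rho_0}$, the lift of a path $\rho_0\to\rho_1$ in $B$ via $\sigma$, and the straight segment from $\sigma(\rho_1)$ to $x_1$ inside $F_{\rho_1}$. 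The substantive point is Hausdorff-continuity of $\rho\mapsto F_\rho$: for a segment, $F_v=K\cap(K-v)$ makes this immediate, and in the 2D case it follows from convexity of $K$ together with the nonemptiness of every fibre (which rules out the usual failure mode of Minkowski difference).

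For the negative direction, my plan is to build an explicit pair $S,K\subset\mathbb{R}^d$ with $\mathcal{T}(S,K)$ disconnected, with the construction living in $\mathbb{R}^4$ and extending trivially to higher $d$. Take $S$ to be a slightly perturbed product of two non-congruent convex bodies supported on two orthogonal $2$-planes, and $K$ a tailor-made convex envelope of $\bigcup_{\rho\in\operatorname{SO}(4)}\rho(S)$, tight enough that at certain `critical' rotations $F_\rho$ collapses to a single point. The extra freedom in $\operatorname{SO}(4)\supseteq\operatorname{SO}(2)\times\operatorname{SO}(2)$, which rotates the two $2$-planes independently, is what dimensions $\geq 4$ buy us: one can arrange for Hausdorff-continuity of $\rho\mapsto F_\rho$ to fail at the critical locus and, more importantly, for two specific inscribed copies of $S$ on opposite sides of that locus to be unlinkable by any continuous family of configurations in $K$. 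Disconnectedness of $\mathcal{T}$ is then certified by a locally constant invariant (essentially, on which `side' of the critical locus the copy sits) that takes both values.

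The \emph{main obstacle} is the negative construction. One must simultaneously enforce $S$-Kakeya (every $\rho$ has $F_\rho\neq\emptyset$) and produce a topological separation inside $\mathcal{T}$; these requirements pull against each other, and balancing them is exactly what forces $d\geq 4$ and is the geometric heart of the theorem. Verifying that every orientation still fits, while simultaneously pinning two orientations so tightly that no continuous deformation can carry one into the other, is the delicate point. The positive direction, by contrast, is a fairly routine application of continuous selection for convex-valued multifunctions over a path-connected base.
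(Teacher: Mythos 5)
First, a remark on scope: this collection states Theorem~\ref{kakeyaknown} as an import from \cite{janzer2022rotation} and contains no proof of it, so your proposal has to be judged on its own terms rather than against an argument printed here.

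Your framework is the right one: the configuration space $\mathcal{T}$ with convex fibres $F_\rho=\bigcap_{s\in S}(K-\rho(s))$, and the reformulation of the Kakeya rotation property as path-connectedness of $\mathcal{T}$. For the positive half, the shape of the argument (continuous selection, e.g.\ via Steiner points, plus convexity of the fibres) is plausible, but note that everything rides on the asserted lower semicontinuity of $\rho\mapsto F_\rho$ in the Hausdorff metric. Upper semicontinuity is automatic from compactness of $K$; lower semicontinuity is exactly the mathematical content of the $d=2$ and segment cases, and ``nonemptiness of every fibre rules out the usual failure mode of the Minkowski difference'' is an assertion, not an argument. That is where the real work is, and the proposal does not do it.

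The negative half contains a step that cannot work as described. You propose to exhibit $S,K\subseteq\mathbb{R}^4$ for which $\mathcal{T}(S,K)$ is \emph{disconnected}, certified by a locally constant invariant taking two values. But $\mathcal{T}(S,K)$ is connected for \emph{every} convex $S$-Kakeya body $K$, in every dimension: $\mathcal{T}$ is compact (closed because $K$ is closed, bounded because $K$ is bounded), and every fibre $F_\rho$ is nonempty (Kakeya) and convex, hence connected; so if $\mathcal{T}=A\sqcup B$ with $A,B$ nonempty and relatively clopen, each fibre lies wholly in $A$ or wholly in $B$, and $\pi(A),\pi(B)$ would be disjoint, nonempty, compact sets covering $\operatorname{SO}(d)$, contradicting the connectedness of $\operatorname{SO}(d)$. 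Hence no locally constant invariant on $\mathcal{T}$ can take two values, and no ``critical locus separating two sides'' can exist. What must fail for $d\geq 4$ is \emph{path}-connectedness of a set that is necessarily connected --- a topologist's-sine-curve-type phenomenon --- and engineering that is a genuinely different and more delicate task than the one you sketch. (Separately, ``extends trivially to higher $d$'' is also not automatic: embedding $S\subseteq\mathbb{R}^4$ into $\mathbb{R}^d$ changes both the orbit under $\operatorname{SO}(d)$ and the meaning of the Kakeya condition.) As it stands, the proposal is missing the key idea for the second half of the theorem.
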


Theorem~\ref{kakeyaknown} leads to several interesting and natural questions, some of these are also mentioned in~\cite{janzer2022rotation}. For example, the counterexample in Theorem~\ref{kakeyaknown} requires $d\geq 4$, but we know that there exists no $2$-dimensional counterexample. It would be interesting to determine whether there is a counterexample in $3$ dimensions (we believe that such does indeed exist).

\begin{question}
Is there a convex body $S$ in $\mathbb{R}^3$ which does not have the Kakeya rotation property?
\end{question}

Furthermore, we see that some convex sets (such as unit line segments) have the Kakeya rotation property, whereas others do not. Which of these two behaviours is `more common'? For example, what happens for two-dimensional balls (=circular disks) embedded in $\mathbb{R}^d$ (for $d\geq 3$), or for (solid) cubes, or (solid) hypercubes? Can we classify the convex sets $S$ having the Kakeya rotation property, or at least obtain a partial classification in some natural cases? As a start, it would be interesting to find any other convex sets $S$ (or families of such) which do have this property. (A trivial example is given by closed balls.)

\begin{question}
	Are there convex bodies $S$ in $\mathbb{R}^d$ with $d\geq 3$ such that $S$ is not a line segment or a $d$-dimensional ball, and $S$ has the Kakeya rotation property?
\end{question}

There are many other natural questions that arise.

\pagebreak

\section{Pinto's Alternative Compressions: a technique in search of more applications (Robert Johnson).}

The compression method has many applications in extremal set theory (and indeed in other areas of mathematics). The content of this contribution, is to describe in general terms a simple but novel twist on the compression method which was invented by Trevor Pinto \cite{TP} to prove a conjecture of Bollob\'as and Leader for which the natural compression approach looks doomed to fail. The `open problem' here is to find more applications of this idea. 

The most appealing special case of Pinto's result (which answers a conjecture in \cite{BL}) is the following.

\begin{theorem}[Pinto \cite{TP}]
Let $\mathcal{A}\subseteq Q_n$ be a down-set and $\mathcal{B}\subseteq Q_n$ be an up-set with $|\mathcal{A}|=|\mathcal{B}|=2^k$ and $\mathcal{A}$ and $\mathcal{B}$ disjoint. Then there are at least $2^k(n-k)$ edge-disjoint upwards directed paths from $\mathcal{A}$ to $\mathcal{B}$.
\end{theorem}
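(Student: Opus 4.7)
My plan is to use max-flow/min-cut to reduce the problem to a purely extremal statement. By Menger's theorem, the maximum number of edge-disjoint upward paths from $\mathcal{A}$ to $\mathcal{B}$ equals the minimum, over subsets $\mathcal{S} \subseteq Q_n$ with $\mathcal{A} \subseteq \mathcal{S}$ and $\mathcal{S} \cap \mathcal{B} = \emptyset$, of the number of upward edges from $\mathcal{S}$ to $Q_n \setminus \mathcal{S}$. So it suffices to show every such $\mathcal{S}$ has upward boundary at least $2^k(n-k)$.

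The natural extremal candidate is the pair $\mathcal{A}_0 = \mathcal{P}([k])$ and $\mathcal{B}_0 = \{S : [n]\setminus[k] \subseteq S\}$. Both have size $2^k$ and they are disjoint whenever $k<n$ (the case $k=n$ being trivial). Each $A \in \mathcal{A}_0$ has exactly $n-k$ upward edges leaving $\mathcal{A}_0$ (one per coordinate in $[n]\setminus[k]$), so $|\partial^+ \mathcal{A}_0|=2^k(n-k)$; moreover, a standard symmetric routing gives the matching lower bound of $2^k(n-k)$ edge-disjoint upward paths between $\mathcal{A}_0$ and $\mathcal{B}_0$, showing that the bound is sharp.

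The main part of the proof is a compression argument reducing the general case to this extremal pair. I would aim to define operations on admissible pairs $(\mathcal{A}, \mathcal{B})$ which preserve sizes, the down-set/up-set properties, and disjointness, while weakly decreasing the min cut; iterated application should terminate at a permutation of $(\mathcal{A}_0, \mathcal{B}_0)$. The obstruction is that the standard $i$-compression is vacuous here: $\mathcal{A}$ already being a down-set and $\mathcal{B}$ already being an up-set leaves nothing to shift in the usual sense. Pinto's \emph{alternative compressions} address this by introducing, for each coordinate $i$, two candidate non-standard compressions tailored to the interaction between $\mathcal{A}$ and $\mathcal{B}$, and showing that at each step at least one of the two is legal. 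The main hurdle is the flow-lifting step: verifying that a compression weakly reduces the min cut, i.e.\ that any maximum flow for the compressed pair can be re-routed to a flow of the same size for the original. It is precisely the double choice of compressions that is engineered to make such a local re-routing always feasible, and once this is established the argument collapses to the extremal pair whose min cut has already been computed.
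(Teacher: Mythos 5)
Your framework is right as far as it goes --- the reduction via Menger/max-flow--min-cut to lower-bounding $|\overrightarrow\partial(\mathcal{S})|$ over all $\mathcal{S}$ with $\mathcal{A}\subseteq\mathcal{S}$ and $\mathcal{S}\cap\mathcal{B}=\emptyset$ is correct, and your computation that the subcube pair gives a cut of exactly $2^k(n-k)$ is fine. But what follows is a plan rather than a proof: the entire content of the theorem is the step you label ``the main hurdle'' and then defer to ``Pinto's alternative compressions'' without supplying an argument. Saying that the double choice of compressions ``is engineered to make such a local re-routing always feasible'' is a description of what a proof would have to achieve, not a proof.

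Moreover, the mechanism you propose differs from Pinto's in a way that creates concrete new obstacles. Pinto does not compress the pair $(\mathcal{A},\mathcal{B})$; he compresses the candidate \emph{cut set} $\mathcal{S}$ directly, using the two operators $\mathcal{C}_i$ (delete sets $S\ni i$ with $S\setminus\{i\}\notin\mathcal{S}$) and $\mathcal{D}_i$ (add sets $S\not\ni i$ with $S\cup\{i\}\in\mathcal{S}$), and the key point is the averaging inequality $|\overrightarrow\partial(\mathcal{S})|\geq \tfrac12\bigl(|\overrightarrow\partial(\mathcal{C}_i(\mathcal{S}))|+|\overrightarrow\partial(\mathcal{D}_i(\mathcal{S}))|\bigr)$, which guarantees that at least one of the two never increases the directed boundary; iterating yields a directed isoperimetric inequality (the minimiser among families sandwiched between the relevant nested families may be taken to be a down-set), after which the numerical bound is checked. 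Your pair-compression scheme would additionally have to (i) preserve disjointness of $\mathcal{A}$ and $\mathcal{B}$ and their exact sizes, (ii) show the min cut is monotone under the operation, and (iii) handle the fact that such processes do not terminate at ``a permutation of $(\mathcal{A}_0,\mathcal{B}_0)$'': a down-set of size $2^k$ need not be a subcube (e.g.\ $\{\emptyset,\{1\},\{2\},\{3\}\}$ in $Q_3$), so the terminal configurations form a larger class for which you have not verified the bound. None of (i)--(iii) is addressed, so the proposal has a genuine gap at its core.
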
 

A key lemma in proving this was a `directed isoperimetric inequality' which, roughly speaking, establishes that among all families of sets which are sandwiched between two given nested up-sets, the minimum of $\overrightarrow\partial(\mathcal{S})$ (the upwards directed edge boundary) is attained by a down-set.

One way to make a family $\mathcal{F}$ into a down-set in the $i$-direction is to remove from it all sets $S$ which contain $i$ and for which $S\setminus\{i\}\not\in\mathcal{F}$. Another way to do this is to add to it all sets $S$ which don't contain $i$ but for which $S\cup\{i\}\in\mathcal{F}$. This gives us two candidate compression operators:
\begin{align*}
\mathcal{C}_i(\mathcal{F})&=\mathcal{F}\setminus\{ S\in\mathcal{F} : i\in S, S\setminus\{i\}\not\in\mathcal{F}\}\\ 
\mathcal{D}_i(\mathcal{F})&=\mathcal{F}\cup\{ S : i\not\in S, S\cup\{i\}\in\mathcal{F}\}
\end{align*}

To prove the key lemma via one of these compressions we would like to have that our compression cannot increase the directed edge-boundary. Unfortunately it is possible to find a family $\mathcal{F}$ for which $|\overrightarrow\partial(\mathcal{C}_i(\mathcal{F}))|>|\overrightarrow\partial(\mathcal{F})|$ and a family $\mathcal{G}$ with $|\overrightarrow\partial(\mathcal{D}_i(\mathcal{G}))|>|\overrightarrow\partial(\mathcal{G})|$. This appears to prevent using either of these compressions as the basis for a proof.

Trevor's brilliant observation was that it is impossible for a family to be simultaneously bad for both the $\mathcal{C}_i$ compression and the $\mathcal{D}_i$ compression. Indeed, it can be shown that
\[
|\overrightarrow\partial(\mathcal{S})|\geq \frac{1}{2}\left(|\overrightarrow\partial(\mathcal{C}_i(\mathcal{S}))|+|\overrightarrow\partial(\mathcal{D}_i(\mathcal{S}))|\right)\geq \min\left\{|\overrightarrow\partial(\mathcal{C}_i(\mathcal{S}))|,|\overrightarrow\partial(\mathcal{D}_i(\mathcal{S}))|\right\}
\]

This is enough to drive a compression argument, but crucially, one in which the operation used at each step is not fixed; we know that one of the $\mathcal{C}_i$ and $\mathcal{D}_i$ compressions works, but which one depends on the family.

\begin{question}
Find some more applications where this twist on the compression method can be used to rescue a compression proof. That is, arguments in which any single compression operator is not guaranteed to behave well in all situations, but where this bad behaviour cannot occur simultaneously for a larger collection of compressions.
\end{question}

\pagebreak

\section{A modular version of Rado's boundedness conjecture (Luka Mili\'cevi\'c).}

One of the many topics Imre has contributed to is that of partition regularity. A key notion in partition regularity is that of a $k$-regular system. For $k \in \mathbb{N}$, we say an $m \times n$ matrix $A$ with integer entries is \emph{$k$-regular} if for any $k$-colouring of $\mathbb{N}$, there exists a monochromatic $n$-tuple $x = (x_1, \dots, x_n) \in \mathbb{N}^n$ such that $A x = 0$. An important problem in this area is the well-known {\em Rado's boundedness conjecture}, stating that for any positive integers $m$ and $n$ there exists another integer $K$ with the following property: if an $m \times n$ integer matrix $A$ is $K$-regular, then it is $k$-regular for all $k$. (Rado \cite{rado} proved that the $m=1$ case of this conjecture in fact implies the general case.) The only non-trivial case when this conjecture is known is $(m,n) = (1,3)$, which is a theorem of Fox and Kleitman~\cite{FK}. One of the steps in their proof is a pass from integers to $\mathbb{Z}/(q\mathbb{Z})$ for a suitable $q$, where they consider the same problem with one variable removed, which can be solved directly. It is therefore of interest to study the modular setting. In fact, Fox and Kleitman formulate a modular variant of Rado's conjecture for a single equation (Conjecture 5 in \cite{FK}), where they restrict their attention to units in $\mathbb{Z}/(q\mathbb{Z})$. We make the following conjecture, which has no such restriction.

\begin{conj}For any $k \in \mathbb{N}$, there exists $K \in \mathbb{N}$ such that following holds. Suppose that $a_1, \dots, a_k \in \mathbb{Z}/(2^r\mathbb{Z})$, and let $d \leq r$ be the largest non-negative integer such that $2^d$ divides $\sum_{i \in I} a_i$ for some non-empty set $I \subseteq [k]$. Then there exists a $K$-colouring of $\mathbb{Z}/(2^r\mathbb{Z})$ such that all monochromatic solutions $x = (x_1, \dots, x_k)$ to the equation $a_1x_1 + \dots + a_kx_k = 0$ satisfy $2^{r-d} | x_i$ for all $i \in [k]$.
\end{conj}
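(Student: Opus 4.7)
We sketch a proof strategy via explicit construction of a colouring of $\mathbb{Z}/(2^r\mathbb{Z})$ based on the $2$-adic structure of its elements, in the spirit of Fox and Kleitman's work on the $(1,3)$ case of Rado's boundedness conjecture. First, if $d = r$, the divisibility $2^{r-d} = 1 \mid x_i$ is automatic and a single colour suffices; assume henceforth that $d < r$. For each nonzero $x$, write $x = 2^{v(x)} u(x)$ with $u(x)$ odd, and take the product colouring
\[
c(x) \;=\; \bigl( v(x) \bmod K_1,\ u(x) \bmod 2^{K_2} \bigr),
\]
assigning $x = 0$ a separate colour, so that the palette has size $K = K_1 \cdot 2^{K_2-1} + 1$.

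Now suppose that $(x_1,\dots,x_k)$ is a monochromatic solution, and set $v_{\min} = \min_i v(x_i)$ and $J = \{i : v(x_i) = v_{\min}\}$. Monochromaticity forces all $v(x_i)$ to agree modulo $K_1$ and all $u_i := x_i/2^{v(x_i)}$ to agree modulo $2^{K_2}$, say $u_i \equiv u_0 \pmod{2^{K_2}}$ with $u_0$ odd. Substituting into $\sum a_i x_i \equiv 0 \pmod{2^r}$ and factoring out $2^{v_{\min}}$, the terms with $i \notin J$ carry an additional factor of at least $2^{K_1}$, yielding
\[
\sum_{i \in J} a_i u_i \;\equiv\; 0 \pmod{2^{\min(K_1,\, r - v_{\min})}}.
\]
Combining this with $\sum_{i\in J} a_i u_i \equiv u_0 \sum_{i \in J} a_i \pmod{2^{K_2}}$ and using that $u_0$ is a unit, we deduce $v_2\bigl(\sum_{i \in J} a_i\bigr) \geq \min(K_1, K_2, r - v_{\min})$. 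If $v_{\min} < r - d$ and the parameters satisfy $K_1, K_2 > d$, this forces $v_2(\sum_{i \in J} a_i) > d$, contradicting the maximality of $d$; hence $v_{\min} \geq r - d$, as required.

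The main obstacle is that this naive argument demands $K_1, K_2 > d$, whereas $d$ can be as large as $r - 1$, so $K$ is not bounded in $k$ alone. To overcome this, I would refine the colouring iteratively, exploiting the fact that there are at most $2^k - 1$ distinct values in the multiset $\{ v_2(\sum_{i \in I} a_i) : \emptyset \neq I \subseteq [k]\}$. At each stage one handles a new subset-sum valuation level using a bounded number of additional colours (peeling off the relevant ``bad'' sets $J$), and the product of the stage-by-stage colourings should have size $K(k) = O_k(1)$ overall. The hardest step will be ensuring that the refinement at stage $\ell$ does not spoil the argument at previously handled levels; this requires factoring out the already-processed subset sums and re-running the $2$-adic digit analysis in an appropriate quotient, and in my view is the principal technical difficulty.
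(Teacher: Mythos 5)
You should first be aware that this statement is a \emph{conjecture} in the paper, not a theorem: the paper offers no proof, and indeed remarks that a standard compactness argument shows the conjecture implies Rado's boundedness conjecture in full, so a proof would be a major result. There is therefore no proof of the paper's to compare yours against; I can only assess your argument on its own terms.

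The first half of your proposal is essentially sound. The colouring $c(x)=(v(x)\bmod K_1,\ u(x)\bmod 2^{K_2})$ and the analysis of the minimal-valuation index set $J$ do yield $v_2\bigl(\sum_{i\in J}a_i\bigr)\ge \min(K_1,K_2,r-v_{\min})$, and since $J\neq\emptyset$ the maximality of $d$ gives $v_2\bigl(\sum_{i\in J}a_i\bigr)\le d$, hence $v_{\min}\ge r-d$ whenever $K_1,K_2>d$. (Two minor repairs: $u(x)\bmod 2^{K_2}$ is only well-defined when $v(x)\le r-K_2$, so the $2^{K_2}$ residues divisible by $2^{r-K_2}$ need separate colours; this is harmless.) But, as you acknowledge, this forces $K$ to grow with $d$, and $d$ can be as large as $r-1$, whereas the conjecture demands $K=K(k)$ uniformly in $r$. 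So this stage correctly reproduces the standard first attempt and correctly diagnoses why it fails.

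The genuine gap is the second half, and it is not a repairable technicality --- it is the entire content of the conjecture. You do not specify what colouring is used at stage $\ell$, why a single ``subset-sum valuation level'' can be handled with boundedly many colours, or why the product of the stages escapes the same blow-up: naively, each stage's parameters must still exceed the valuation it is meant to detect, which reinstates the dependence on $d$. The observation that there are at most $2^k-1$ distinct valuations $v_2\bigl(\sum_{i\in I}a_i\bigr)$ does not by itself help, because these values can be spread arbitrarily through $\{0,\dots,r-1\}$ and the set $J$ realised by a monochromatic solution is not known in advance. Removing exactly this kind of unbounded dependence is what makes even the Fox--Kleitman $(1,3)$ case delicate, and their modular Conjecture 5 (with coefficients restricted to units) remains open in general. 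As written, your proposal is a correct account of why the obvious $2$-adic colouring is insufficient, together with a programme whose decisive step is missing.
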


A standard compactness argument shows that this conjecture implies Rado's boundedness conjecture for a single equation, and therefore (by Rado's above-mentioned result) implies every case of Rado's boundedness conjecture.

\pagestyle{empty}

\pagebreak

\section{A boundedness conjecture for Ramsey sets (Luka Mili\'cevi\'c).}

Another area that Imre has studied is Euclidean Ramsey theory. We say that a finite set $X$ in some Euclidean space $\mathbb{R}^n$ is \emph{Ramsey} if for any $k \in \mathbb{N}$ there exists $d \in \mathbb{N}$ such that whenever $\mathbb{R}^d$ is $k$-coloured, there exists a monochromatic congruent copy of $X$.

A central question in Euclidean Ramsey theory is that of characterizing Ramsey sets. Erd\H{o}s, Graham, Montgomery, Rothschild, Spencer and Straus ~\cite{Ramsey} proved that a Ramsey set must be {\em spherical} (meaning, all points thereof lie on some sphere). A well-known question of Erd\H{o}s, Graham, Montgomery, Rothschild, Spencer and Straus~\cite{Ramsey} asks whether the converse holds, i.e., are all spherical sets Ramsey? Graham \cite{graham} later conjectured a positive answer to this question.

Interestingly, Imre, together with Russell and Walters~\cite{IRW}, formulated a competing conjecture, stating that the Ramsey sets are precisely the subsets of \emph{transitive sets} (where, as usual, we say a set is {\em transitive} if its symmetry group acts transitively).

Here, in the spirit of Rado's boundedness conjecture, we formulate another conjecture on Ramsey sets.

\begin{conj}
For any $k \in \mathbb{N}$ and $n\in \mathbb{N}$, there exists $d = d(k,n) \in \mathbb{N}$ such that, whenever $X$ is a Ramsey set of size at most $n$ in some Euclidean space, and $\mathbb{R}^d$ is $k$-coloured, there exists a monochromatic congruent copy of $X$.
\end{conj}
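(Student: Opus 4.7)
The plan is to proceed by compactness and contradiction, in a pattern that mirrors how one might hope to tackle the analogous boundedness problem for Rado's conjecture. Fix $k,n \in \mathbb{N}$ and suppose that no such $d(k,n)$ exists: then for each $D$ there is a Ramsey set $X_D$ of size at most $n$ (which may be taken to sit in some Euclidean space of dimension at most $n-1$) together with a $k$-colouring $\chi_D$ of $\mathbb{R}^D$ avoiding a monochromatic congruent copy of $X_D$. A set of at most $n$ points is determined up to congruence by its multiset of $\binom{n}{2}$ pairwise distances, and after rescaling to unit diameter such distance data lie in a compact subset of $\mathbb{R}^{\binom{n}{2}}$, so along a subsequence we have $X_D \to X^*$, where $X^*$ is a fixed normalised set of at most $n$ points.

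The plan then has two further ingredients. First, one would like to argue that the limit $X^*$ is itself Ramsey, so that $d(X^*,k)=d^*$ is finite. This step is trivial under Graham's conjecture (`spherical' is a closed condition on normalised distance spectra) and also plausible under the conjecture of Leader, Russell and Walters \cite{IRW} (being a subset of a transitive set is closed under such limits, since transitivity is controlled by a finite symmetry group); failing either of those, I would attempt to establish closure of `being Ramsey' under limits of $\binom{n}{2}$-tuples of distances directly from the definition. Second, one needs a stability/transference step: show that if $X$ is close enough to a fixed Ramsey set $X^*$ (in the normalised distance metric), then $X$ itself is Ramsey with dimension at most $d^*+C(n,k)$. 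Applied along our subsequence, this would give a uniform bound on $d(X_D,k)$, producing a monochromatic copy of $X_D$ under $\chi_D$ as soon as $D>d^*+C$ (by restricting $\chi_D$ to a $(d^*+C)$-dimensional affine subspace), and hence the desired contradiction.

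The main obstacle lies squarely in the second ingredient. `Congruent copy' is a rigid notion: a monochromatic copy of $X^*$ in a colouring of $\mathbb{R}^{d^*}$ does not automatically yield a monochromatic copy of a nearby $X$, so there is no reason a priori for the Ramsey dimension $d(X,k)$ to be even upper semi-continuous in $X$. All existing proofs that particular configurations are Ramsey (rectangular boxes, regular simplices, subsets of transitive sets, and so on) exploit a rigid product or group-theoretic structure that is destroyed by arbitrary small perturbations, so no robustness theorem of the required form seems to be available. Any successful attack therefore probably requires either a structural classification of Ramsey sets strong enough to ensure that each finite-dimensional family of normalised Ramsey sets of size at most $n$ is already pinned down by finitely many algebraic conditions, or a genuinely new argument --- perhaps in the topological or ergodic spirit of Hales-Jewett --- that is insensitive to small geometric perturbations of the target configuration.
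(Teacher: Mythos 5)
You should first be aware that the paper does not prove this statement: it is stated as an open conjecture (in the spirit of Rado's boundedness conjecture), so there is no proof of record to compare against. Your proposal is, as you yourself acknowledge, an outline with a missing ingredient rather than a proof, and the acknowledged gap --- the stability/transference step showing that sets close to $X^*$ in the normalised distance metric are Ramsey in a uniformly bounded dimension --- is exactly the kind of statement that no current technique delivers. So the proposal cannot be accepted as a proof.

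Beyond the gap you concede, there is a second, unacknowledged break in the argument: your first ingredient is false as stated. Sphericity is \emph{not} a closed condition on normalised distance data, and neither is being Ramsey. Take the unit-diameter triangles with vertices $(0,0)$, $(1,0)$, $(\tfrac12,\epsilon)$ as $\epsilon \to 0$: each is a non-degenerate triangle, hence spherical and in fact known to be Ramsey, but the limit is three distinct collinear points, which lie on no sphere and hence, by the theorem of Erd\H{o}s, Graham, Montgomery, Rothschild, Spencer and Straus cited in the paper, is \emph{not} Ramsey. The same degeneration defeats the ``subset of a transitive set'' version, since subsets of transitive sets are spherical. So even granting Graham's conjecture, the compactness step can hand you a limit configuration $X^*$ that is not Ramsey, and the plan has nothing to say in that case. (A smaller point: a labelled point set is determined up to congruence by its distance \emph{matrix}, not by the multiset of pairwise distances; this is easily repaired but should be stated correctly.) Any viable compactness approach would need to either rule out such degenerations quantitatively --- e.g., show that the witnesses $X_D$ can be chosen uniformly non-degenerate --- or abandon the limit-object strategy altogether; as it stands, both pillars of the proposal are unsupported and one of them is demonstrably false.
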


\pagestyle{empty}

\pagebreak

\section{Covering the product of complete graphs (Ta Sheng Tan).}

The Graham-Pollak problem asks for a determination of $f_r(n)$, defined as the least number of complete $r$-partite $r$-graphs needed to partition the edge set of the complete $r$-graph on $n$ vertices.
The key to the approach used in \cite{leader} and \cite{leader2} in proving the (currently best-known) general upper bounds for $f_r(n)$ was to investigate $g(n)$, defined as the minimum number of products of complete bipartite graphs (that is, sets of the form $E(K_{a,b})\times E(K_{c,d})$) needed to partition the set $E(K_n)\times E(K_n)$.
It is trivial that $g(n)\le (n-1)^2$, by taking the products of the complete bipartite graphs appearing in a decomposition of $K_n$ into $n-1$ stars.
In \cite{leader}, it was shown that
$$\left(\frac{1}{2}+o(1)\right)n^2 \le g(n)\le \left(\frac{14}{15}+o(1)\right)n^2,$$
but the problem of determining the asymptotic of $g(n)$ remains open.
The above upper bound on $g(n)$ was obtained by investigating the minimum number $h(n)$ of parts needed in a partition of $E(K_4)\times E(K_n)$ into products of complete bipartite graphs. While the asymptotic of $h(n)$ is not known, we conjecture in \cite{leader} that $h(n) = \left(\frac{12}{5}+o(1)\right)n$. We ask the following.

\begin{question}[\cite{leader}]
 Is it true that $g(n)=\left(\frac{4}{5}+o(1)\right)n^2$?
\end{question} 

The upper bound $g(n) \leq \left(\frac{4}{5}+o(1)\right)n^2$ would follow from the upper bound $h(n) \leq \left(\frac{12}{5}+o(1)\right)n$, as a partition of $E(K_n) \times E(K_n)$ can be built out of a partition of $E(K_4) \times E(K_n)$ in a natural way. (See \cite{leader} for more details on the motivation.)

What if we instead allow each element of $E(K_n)\times E(K_n)$ to be in an odd number of products of complete bipartite graphs?
Let $\tilde{g}(n)$ be the minimum number of products of complete bipartite graphs needed so that each element of $E(K_n)\times E(K_n)$ is in an odd number of these products of complete bipartite graphs.

\begin{question}
 What are the values of $\tilde{g}(n)$?
\end{question}

Of course, we have $\tilde{g}(n)\le g(n)$.
But in fact, using a result of \cite{buchanan} about the smallest odd cover of $K_n$, it is straightforward that $\tilde{g}(n)\le \left(\left\lceil\frac{n}{2}\right\rceil+1\right)^2$.
Similarly to how the function $g$ is related to $f_r$, one can use $\tilde{g}$ to study the odd cover variant of the Graham-Pollak problem.
For a fixed $r$, let $\tilde{f}_r(n)$ be the minimum number of complete $r$-partite $r$-graphs needed so that each edge of $K_n^{(r)}$ is in an odd number of these complete $r$-partite $r$-graphs.
Using almost identical arguments as in \cite{leader} and \cite{leader2}, one can obtain general upper bounds of $\tilde{f}_r(n)$ based on $\tilde{g}(n)$.
In particular, we have $\tilde{f}_4(n)\le \left(\frac{1}{4}+o(1)\right)\binom{n}{2}$, answering Problem 2 of \cite{buchanan} affirmatively for the case when $r=4$.
Finally, we remark that the lower bound of $\tilde{f}_4(n)$ mentioned in \cite{buchanan} gives a lower bound of $\tilde{g}(n)\ge \left(\frac{1}{6}+o(1)\right)n^2$.

\pagebreak

\section{Partially shattering $k$-sets with permutations (Belinda Wickes).}

Many concepts from extremal set theory have interesting analogues for families of
permutations. This problem is concerned with the notion of shattering. A family $\mathcal{P}$
of permutations from $S_n$ {\em shatters} a $k$-set $X \subset [n]$ if the elements of $X$ appear in each of the
$k!$ possible orders in permutations from $\mathcal{P}$.
For example, the following family of permutations from $S_5$ shatters the triple
$\{2, 3, 5\}$, but not $\{1, 4, 5\}$, since no permutation gives the order $(4, 5, 1)$.
$$\{(1, 2, 3, 4, 5), (2, 4, 1, 5, 3), (5, 3, 4, 1, 2),(1, 4, 3, 5, 2), (3, 1, 2, 5, 4), (5, 1, 2, 4, 3)\}$$

The smallest family $\mathcal{P}$ which shatters every $k$-subset of $[n]$ is known to have size
$\Theta(\log n)$ (\cite{rad} and \cite{spen}).

Here, we consider a natural partial version of this shattering
problem. A family $\mathcal{P} \subseteq S_n$ is said to {\em $t$-shatter} a set $X \subseteq [n]$ if the elements of $X$ appear
in at least $t$ distinct orders among the permutations of $\mathcal{P}$. We will be interested in
small families of permutations which $t$-shatter every $k$-subset of $[n]$. For example, the
family above 4-shatters every 3-subset of $[5]$. It is proved in \cite{wickes} that the minimum size of a family $\mathcal{P}$ which $t$-shatters every
3-subset of $[n]$ falls into three distinct regimes.
\begin{theorem}
$$\min\{|\mathcal{P}|:\ \mathcal{P}\ t\text{-shatters every 3-subset of }[n]\} = \begin{cases} t & \text{if } t=1,2;\\ \Theta(\log \log n) & \text{if } t=3,4; \\ \Theta(\log n) & \text{if } t=5,6.\end{cases}$$
\end{theorem}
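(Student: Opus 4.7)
My plan is to treat all six values of $t$ in a unified framework. For a triple $T\subseteq[n]$ with $|T|=3$ and a permutation $\pi\in S_n$, call the element of $T$ whose $\pi$-position is strictly between the other two the \emph{middle} of $T$ in $\pi$, and let $M_\mathcal{P}(T)\subseteq T$ be the set of middles realised across $\mathcal{P}$. Since each middle supports at most two orderings (the two directions), the number of orderings realised lies in $[\,|M_\mathcal{P}(T)|,\ 2|M_\mathcal{P}(T)|\,]$. This splits $t$-shattering into three regimes: $t\in\{1,2\}$ needs $|M_\mathcal{P}(T)|\geq 1$, $t\in\{3,4\}$ needs $|M_\mathcal{P}(T)|\geq 2$, and $t\in\{5,6\}$ needs $|M_\mathcal{P}(T)|=3$. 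Moreover, appending the reverse of every $\pi\in\mathcal{P}$ doubles $|\mathcal{P}|$ and realises both directions for every middle, so an upper bound for $t\in\{1,3,5\}$ yields an upper bound for $t\in\{2,4,6\}$ up to a factor of two.

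The small and large cases are then quick. The identity alone handles $t=1$. For $t=2$, the identity together with $(n,n{-}1,\ldots,1)$ gives $(a,b,c)$ and $(c,b,a)$ for every triple $\{a<b<c\}$, and a single permutation realises only one ordering per triple. For $t\in\{5,6\}$ the upper bound $O(\log n)$ is the classical bound for full shattering via random permutations, and the lower bound $\Omega(\log n)$ drops out of pigeonhole: given $\mathcal{P}$ of size $m$, attach to each $i\in[n]$ and $j\neq i$ the vector $\sigma^{(i)}(j)\in\{0,1\}^m$ whose $\ell$th coordinate is $1$ iff $j$ precedes $i$ in $\pi_\ell$; if $n-1>2^m$, some $j\neq k$ collide, so $j,k$ lie on the same side of $i$ in every permutation, hence $i$ is never the middle of $\{i,j,k\}$ and this triple realises at most $4$ orderings.

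The main work is the middle regime $t\in\{3,4\}$. For the lower bound, I would edge-colour $K_n$ by the \emph{sign vector} $s(i,j)\in\{+,-\}^m$ (for $i<j$) defined by $s(i,j)_\ell=+$ iff $\pi_\ell^{-1}(i)<\pi_\ell^{-1}(j)$; this uses at most $2^m$ colours. A monochromatic triangle $\{i<j<k\}$ satisfies $s(i,j)=s(j,k)$, which a direct case-check shows forces $j$ to be the middle of $\{i,j,k\}$ in every $\pi\in\mathcal{P}$, violating $|M_\mathcal{P}(\{i,j,k\})|\geq 2$. Applying the Schur-type bound $R(3;r)\leq \lfloor e\cdot r!\rfloor+1$ with $r=2^m$ then gives $\log n = O(m\cdot 2^m)$, hence $m=\Omega(\log\log n)$.

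The hard part will be the matching $O(\log\log n)$ upper bound: constructing $m$ permutations of $[n]$ with $n\approx 2^{2^{\Omega(m)}}$ whose induced sign-vector colouring of $K_n$ has no monochromatic triangle — a realisable-by-permutations counterpart of the Schur-type constructions that push $R(3;r)$ up to roughly $r!$. Random permutations are far too weak (they give only $O(\log n)$), so the construction must be structured. Natural attempts include identifying $[n]$ with a carefully chosen subset of $\{0,1\}^{2^m}$ and letting each $\pi_\ell$ sort lexicographically according to a tailor-made block decomposition, or a recursive doubling scheme producing a valid family on $[n^2]$ from one on $[n]$ together with a single new permutation that rotates the blocks. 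In either case the delicate point is to simultaneously control the sign-vector pattern across all $\binom{n}{3}$ triples despite the double-exponential gap between $|\mathcal{P}|$ and $n$; the construction that works is given in~\cite{wickes}.
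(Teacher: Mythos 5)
The paper itself states this theorem without proof, citing the Johnson--Wickes paper, so there is no in-text argument to compare yours against; I can only assess your proposal on its own terms. Your middle-set framework is sound and correctly handles five of the six cases. The reduction of $t$-shattering to conditions on $|M_{\mathcal{P}}(T)|$ is right (each realised middle accounts for between one and two orderings, and the reversal trick upgrades ``$j$ distinct middles'' to ``$2j$ orderings'' at the cost of a factor $2$ in $|\mathcal{P}|$). The exact values for $t=1,2$, the $O(\log n)$ upper bound for $t=5,6$ via random permutations, and the $\Omega(\log n)$ lower bound via the pigeonhole on the vectors $\sigma^{(i)}(j)$ are all correct. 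The $\Omega(\log\log n)$ lower bound for $t=3,4$ also works: a monochromatic triangle under the sign-vector colouring does force the numerically middle vertex to be the positional middle in every permutation, so $n < R(3;2^m)$, and the factorial bound on the multicolour Ramsey number gives $m = \Omega(\log\log n)$.

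The genuine gap is the matching $O(\log\log n)$ upper bound for $t\in\{3,4\}$, which you explicitly leave unproved and which is the real content of the theorem in that regime --- it is what separates the middle case from the $\Theta(\log n)$ case and is the part that cannot be obtained by any first-moment argument (as you note, random permutations only give $O(\log n)$). What is needed is an explicit family of $m$ permutations of $[n]$ with $n$ doubly exponential in $m$ such that no triple has all its middles equal; your two suggested routes (lexicographic sorting of a subset of $\{0,1\}^{2^m}$ under tailored block decompositions, or a recursive scheme passing from $[N]$ to roughly $[N^2]$ while adding $O(1)$ permutations) are plausible shapes for such a construction, but neither is carried out, and the delicate point you identify --- controlling the middle of every one of the $\binom{n}{3}$ triples simultaneously, including triples that are ``close'' at every scale of the recursion --- is exactly where the work lies. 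Deferring this to the cited source means the proposal, as written, proves the theorem except in the one regime where the answer is surprising.
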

The main problem is to extend this result to $k > 3$. The following partial result
is proved in \cite{wickes}.

\begin{theorem}
$$\min\{|\mathcal{P}|:\ \mathcal{P}\ t\text{-shatters every }k\text{-subset of }[n]\} = \begin{cases} t & \text{if } t = 1,2;\\ 
\Theta(\log \log n) & \text{if } t \in [3, k];\\
\Theta(\log n) & \text{if } t \in [2(k-1)!+1,k!].\end{cases}$$
\end{theorem}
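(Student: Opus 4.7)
My plan splits the theorem into the two non-trivial regimes; the cases $t=1,2$ are immediate, since a single permutation $1$-shatters every $k$-set and the pair $\{\mathrm{id},\mathrm{rev}\}$ $2$-shatters each one. Throughout, for a family $\mathcal{P}=\{\sigma_1,\ldots,\sigma_m\}$ I will work with the sign vector $v(x,y)\in\{+,-\}^m$ of an ordered pair, defined by $v(x,y)_i=+$ if and only if $\sigma_i(x)<\sigma_i(y)$.

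For the $\Theta(\log n)$ regime with $t\in[2(k-1)!+1,k!]$, the upper bound will come from a routine union bound: taking $Ck\log n$ independent uniform random permutations, the probability that a fixed $k$-set misses a fixed ordering is $(1-1/k!)^m$, and $\binom{n}{k}\cdot k!\cdot(1-1/k!)^m<1$ yields a family that $k!$-shatters every $k$-set. For the lower bound I fix any $x_0\in[n]$ and consider the map $y\mapsto v(x_0,y)$ on $[n]\setminus\{x_0\}$; if $2^m<(n-1)/(k-2)$, pigeonhole produces $k-1$ distinct elements $y_1,\ldots,y_{k-1}$ with a common sign vector $v^\ast$, so each $\sigma_i$ places $\sigma_i(x_0)$ strictly below or strictly above all the $\sigma_i(y_j)$ at once. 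Hence $x_0$ occupies position $1$ or $k$ in every induced ordering of $\{x_0,y_1,\ldots,y_{k-1}\}$, and that $k$-set sees at most $2(k-1)!$ orderings, contradicting $t\geq 2(k-1)!+1$; this gives $m=\Omega(\log n)$.

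For the $\Theta(\log\log n)$ regime with $t\in[3,k]$, the lower bound rests on the observation that if all $\binom{k}{2}$ pairs in a $k$-set $X$ share the same sign vector $v^\ast$, then each $\sigma_i$ orders $X$ either in natural order (when $v^\ast_i=+$) or in reverse, so $X$ sees at most $2$ orderings. Thus $3$-shattering forces the $2^m$-colouring of $\binom{[n]}{2}$ by sign vectors to contain no monochromatic $K_k$, and the multi-colour Ramsey upper bound $R(K_k;r)\leq r^{O(rk)}$ gives $n\leq 2^{O(mk\cdot 2^m)}$, hence $m=\Omega(\log\log n)$. For the matching upper bound I would construct $m=O(\log\log n)$ total orders that $k$-shatter every $k$-subset by a recursive doubling construction: lift a good family on $[N]$ to one on $[N^2]$ by combining each existing order with itself across the two coordinates and adjoining a bounded number of tiebreaker orders, so that the step $n\mapsto n^2$ costs only $O(1)$ permutations and $O(\log\log n)$ iterations suffice to reach $[n]$.

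The main obstacle lies in the $\Theta(\log\log n)$ upper bound. Uniformly random permutations cannot deliver it, since the probability that a fixed $k$-set is sign-monochromatic is $(2/k!)^m$ and the $\binom{n}{k}$-term union bound only closes at $m=\Theta(\log n)$. Worse, avoiding a sign-monochromatic $K_k$ is only necessary, not sufficient, for $k$-shattering: a small example with $k=3$ has three distinct pair sign vectors yet only two orderings on the triple, so the upper bound cannot be reduced purely to Ramsey-style colour avoidance. Verifying that the doubling construction produces at least $k$ distinct orderings on every $k$-subset, rather than just $2$ or $3$, is the delicate technical step.
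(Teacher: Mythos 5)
The paper states this theorem without proof, deferring entirely to the Johnson--Wickes reference, so there is no in-text argument to compare you against; judging the proposal on its own terms, most of it is correct and complete. The cases $t=1,2$ are indeed immediate. Your $\Omega(\log n)$ lower bound is sound: pigeonholing the $n-1$ sign vectors $v(x_0,y)$ into $2^m$ classes to find $y_1,\dots,y_{k-1}$ forcing $x_0$ into an extreme position caps the number of realised orders at $2(k-1)!$, exactly what is needed for $t\geq 2(k-1)!+1$; the matching $O(\log n)$ upper bound by random permutations is standard. The $\Omega(\log\log n)$ lower bound via the $2^m$-colouring of pairs and the multicolour Ramsey bound is also correct: a monochromatic $K_k$ forces every $\sigma_i$ to be monotone on that $k$-set, giving at most $2$ orders.

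The genuine gap is the $O(\log\log n)$ upper bound, which is the real content of the middle regime and which you only sketch. The recursive squaring $[N]\to[N]^2$ has the right shape (it is what makes $m$ grow like $\log\log n$), but the inductive step is where the theorem lives, and it is not routine. If each $\sigma\in\mathcal{P}_N$ is lifted to the lexicographic order on $[N]^2$ (first coordinate under $\sigma$, ties broken by the second under $\sigma$), then a $k$-set with repeated first coordinates is ordered by a mixture of first- and second-coordinate comparisons, and nothing in the inductive hypothesis controls the correlation of these comparisons across $\mathcal{P}_N$: already for $k=3$ and $T=\{(a,b_1),(a,b_2),(a',b_3)\}$ the lifted orders are determined by the pair of signs $\bigl(\sign(\sigma(a')-\sigma(a)),\ \sign(\sigma(b_2)-\sigma(b_1))\bigr)$, and if these always agree you realise only two orders. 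So the claim that $O(1)$ tiebreakers per squaring step repair every degenerate configuration, uniformly in the recursion level and for general $k$, is precisely the assertion that needs proof --- and, as you observe yourself, avoiding sign-monochromatic configurations is necessary but not sufficient for $k$-shattering, so there is no Ramsey-style shortcut. As written, your argument establishes only $O(\log n)$ in the regime $t\in[3,k]$, and the theorem is not yet proved.
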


As before, the same three regimes appear, but it is unknown what happens to the
size of $\mathcal{P}$ when $t$ lies in the range $[k+1,2(k-1)!]$. In particular, we have the following
questions:
\begin{itemize} \item Are there any values of $t$ for which the size of an extremal family is not in one
of these three regimes?
\item If not, then for which $t$ does the transition from $\Theta(\log \log n)$ to $\Theta(\log n)$ occur?
\end{itemize}

\end{document}